\newcommand\ignore[1]{}
\DeclareRobustCommand{\SkipTocEntry}[5]{}
\newlength{\fuzzwidth}
\newlength{\arrowlength}
\newlength{\arrowwidth}
\newlength{\pointrad}
\newlength{\linewid}
\newlength{\circlerad}
\newlength{\smcirclerad}
\newcommand{\fuzzcolor}{black!25}
\newcommand{\arrowcolor}{black!25}
\newcommand{\covercolor}{black!0}
\newcommand{\graycolor}{black!55}
\newcommand{\graylightcolor}{black!40}
\newcommand{\coverwidthfuzz}{6pt}
\newcommand{\coverwidth}{3.5pt}
\newcommand{\coverwidththin}{3.25pt}
\newcommand{\coverwidththick}{3.75pt}
\newlength{\linewidthin}
\newlength{\linewidthick}
\tikzset{
	coverline/.style={
	preaction={draw,line width=\coverwidth,\covercolor}}, 
	coverlinethin/.style={
	preaction={draw,line width=\coverwidththin,\covercolor}}, 
	coverlinethick/.style={
	preaction={draw,line width=\coverwidththick,\covercolor}}, 
	coverlineleft/.style={
	preaction={draw,line width=\coverwidthfuzz,\covercolor,decorate,decoration={curveto,amplitude=0,raise=.35*\fuzzwidth}}}, 
coverlinelefttail/.style={
	preaction={draw,line width=\coverwidthfuzz,\covercolor,decorate,decoration={curveto,amplitude=0,raise=.35*\fuzzwidth,pre=moveto,pre length=2pt}}}, 
        fuzzlefttail/.style={
        preaction={draw,line width=\fuzzwidth,\fuzzcolor,decorate,decoration={curveto,pre=moveto,pre length=2pt,amplitude=0,raise=.5*\fuzzwidth}}}, 
        linestylethin/.style={line width=\linewidthin},
        linestylethick/.style={line width=\linewidthick},
        linestylegray/.style={line width=\linewid,\graycolor},
        linestylegraylight/.style={line width=\linewid,\graylightcolor}
}
\tikzset{
        fuzzright/.style={
        preaction={draw,line width=\fuzzwidth,\fuzzcolor,decorate,decoration={curveto,amplitude=0,raise=-.5*\fuzzwidth}}},
        fuzzleft/.style={
        preaction={draw,line width=\fuzzwidth,\fuzzcolor,decorate,decoration={curveto,amplitude=0,raise=.5*\fuzzwidth}}},
        fuzzrightpre/.style={ 
        preaction={draw,line width=2pt,\fuzzcolor,decorate,decoration={curveto,amplitude=0,raise=-1pt,pre=moveto,pre length=12pt}}},
        fuzzleftpre/.style={ 
        preaction={draw,line width=2pt,\fuzzcolor,decorate,decoration={curveto,post=moveto,post length=32pt,amplitude=0,raise=1pt}}},        
        outstyle/.style={\arrowcolor, line width=\arrowwidth},
        linestyle/.style={line width=\linewid}
}
\newcommand{\arxiv}[1]{\href{http://arxiv.org/abs/#1}{\tt arXiv:\nolinkurl{#1}}}
\newcommand{\googlebooks}[1]{(preview at \href{http://books.google.com/books?id=#1}{google books})}
\theoremstyle{plain} 
\newtheorem{theorem}{Theorem}[section]
\newtheorem*{theorem*}{Theorem}
\newtheorem{lemma}[theorem]{Lemma}
\newtheorem{corollary}[theorem]{Corollary}          
\newtheorem{proposition}[theorem]{Proposition}
\theoremstyle{definition} 
\newtheorem{definition}[theorem]{Definition}
\theoremstyle{remark}  
\newtheorem{remark}[theorem]{Remark}
\newtheorem{example}[theorem]{Example}
\newtheorem{warning}[theorem]{{Warning}}
\newtheoremstyle{special_statement} 
	{\topskip}
	{\topskip}
	{\addtolength{\leftskip}{2.5em} \itshape }
	{}
	{\bfseries}
	{:}
	{.5em}
	{}
\theoremstyle{special_statement}
\newcommand{\Mod}[2]  
{
  \ifthenelse{\equal{#1}{}}{  			
		\ifthenelse{\equal{#2}{}}		
			{\mathrm{Mod}}{ 			
				{\mathrm{Mod}\textrm{-}#2}		
			}
	}{									
		\ifthenelse{\equal{#2}{}}		
			{{#1\textrm{-}\mathrm{Mod}}}{		
				{{#1\textrm{-}\mathrm{Mod}\textrm{-}#2}}	
			}
	}
}
\newcommand{\nid}{\noindent}
\newcommand{\ra}{\rightarrow}
\DeclareMathOperator{\Hom}{Hom}
\DeclareMathOperator{\IHom}{\underline{Hom}}
\DeclareMathOperator{\Fun}{Fun}
\DeclareMathOperator{\coeq}{coeq}
\DeclareMathOperator{\eq}{eq}
\DeclareMathOperator*{\coker}{coker}
\renewcommand{\mp}{\mathrm{mp}}
\newcommand{\op}{\mathrm{op}}
\newcommand{\id}{\mathrm{id}}
\newcommand{\Vect}{\mathrm{Vect}}
\newcommand{\Rep}{\mathrm{Rep}}
\def\cC{\mathcal C}\def\cD{\mathcal D}
\def\cE{\mathcal E}\def\cF{\mathcal F}\def\cG{\mathcal G}
\def\cK{\mathcal K}\def\cL{\mathcal L}
\def\cM{\mathcal M}\def\cN{\mathcal N}\def\cP{\mathcal P}
\def\cR{\mathcal R}
\def\cX{\mathcal X}
\definecolor{CSPcolor}{rgb}{0.0,0.5,0.75}	
\definecolor{NScolor}{rgb}{0.5,0.0,0.5}		
\definecolor{CDcolor}{rgb}{0.8,0.0,0.2}		
\newcommand{\CD}[1]{\marginpar{\vspace*{-20pt}\tiny\color{CDcolor}{ #1}\vspace*{20pt}}}
\newcommand{\NS}[1]{\marginpar{\vspace*{-20pt}\tiny\color{NScolor}{ #1}\vspace*{20pt}}}
\begin{document}

\title{The balanced tensor product of module categories}

\begin{abstract}
The balanced tensor product $M \otimes_A N$ of two modules over an algebra $A$ is the vector space corepresenting $A$-balanced bilinear maps out of the product $M \times N$.  The balanced tensor product $\cM \boxtimes_{\cC} \cN$ of two module categories over a monoidal linear category $\cC$ is the linear category corepresenting $\cC$-balanced right-exact bilinear functors out of the product category $\cM \times \cN$.  We show that the balanced tensor product can be realized as a category of bimodule objects in $\cC$, provided the monoidal linear category is finite and rigid.
\end{abstract}


\author{Christopher L. Douglas}
\address{Mathematical Institute\\ University of Oxford\\ Oxford OX1 3LB\\ United Kingdom}
\email{cdouglas@maths.ox.ac.uk}
\urladdr{http://people.maths.ox.ac.uk/cdouglas}
      	
\author{Christopher Schommer-Pries}
\address{Department of Mathematics\\ Max Planck Institute for Mathematics \\ 53111 Bonn \\ Germany}
\email{schommerpries.chris.math@gmail.com}
\urladdr{http://sites.google.com/site/chrisschommerpriesmath}

\author{Noah Snyder}
\address{Department of Mathematics\\ Indiana University\\ Bloomington, IN 47401\\ USA}
\email{nsnyder@math.indiana.edu}
\urladdr{http://www.math.columbia.edu/\!\raisebox{-1mm}{~}nsnyder/}

\thanks{CD was partially supported by a Miller Research Fellowship and by EPSRC grant EP/K015478/1, CSP was partially supported by NSF fellowship DMS-0902808 and the Max Planck Institute for Mathematics, and NS was partially supported by NSF fellowship DMS-0902981 and DARPA HR0011-11-1-0001.  We thank the referee for suggestions and corrections, and thank Manuel Ara\'ujo for pointing out a simplification of the proof of Theorem~\ref{thm:DelignePrdtOverATCExists}.
}

\maketitle	

\tikzexternaldisable


Tensor categories are a higher dimensional analogue of algebras.  Just as modules and bimodules play a key role in the theory of algebras, the analogous notions of module categories and bimodule categories play a key role in the study of tensor categories (as pioneered by Ostrik \cite{MR1976459}).  One of the key constructions in the theory of modules and bimodules is the relative tensor product $M \otimes_A N$.  As first recognized by Tambara \cite{tambara}, a similarly important role is played by the balanced tensor product of module categories over a monoidal category $\cM \boxtimes_\cC \cN$ (for example, see \cite{MR1966524, 0909.3140, MR2511638, MR2909758, 1202.4396, MR3022755, MR3063919}).  For $\cC = \Vect$, this agrees with Deligne's~\cite{MR1106898} tensor product $\cK \boxtimes \cL$ of finite linear categories. Etingof, Nikshych, and Ostrik~\cite{0909.3140} established the existence of a balanced tensor product $\cM \boxtimes_{\cC} \cN$ of finite semisimple module categories over a fusion category, and Davydov and Nikshych \cite[\S 2.7]{MR3107567} outlined how to generalize this construction to module categories over a finite tensor category.\footnote{We were unaware of Davydov--Nikshych's result while writing this paper.}  We give a new construction of the balanced tensor product over a finite tensor category as a category of bimodule objects.

Recall that the balanced tensor product $M \otimes_A N$ of modules is, by definition, the vector space corepresenting $A$-balanced bilinear functions out of $M \times N$.  In other words, giving a map $M \otimes_A N \rightarrow X$ is the same as giving a billinear map $f: M \times N \rightarrow X$ with the property that $f(ma,n) = f(m, an)$.  If the balanced tensor product exists, it is certainly unique (up to unique isomorphism), but the universal property does not guarantee existence.  Instead existence is typically established by an explicit construction as a quotient of a free abelian group on the product $M \times N$. We now describe the balanced tensor product $\cM \boxtimes_{\cC} \cN$ of module categories over a tensor category.  Again this should be universal for certain bilinear functors, however when passing from algebras to tensor categories, the analogue of the equality $f(ma,n) = f(m, an)$ is a natural system of isomorphisms $\eta_{m,a,n}: \cF(m \otimes a,n) \rightarrow \cF(m, a \otimes n)$ satisfying some natural coherence properties.  A bilinear functor $\cF$ together with a natural coherent system of isomorphisms $\eta_{m,a,n}$ is called a $\cC$-balanced functor.  Thus the balanced tensor product $\cM \boxtimes_{\cC} \cN$ is defined to be the linear category corepresenting $\cC$-balanced right-exact bilinear functors out of $\cM \times \cN$. In other words, giving a right-exact functor $\cM \boxtimes_{\cC} \cN \rightarrow \cX$ is the same as giving a right-exact bilinear functor $\cM \times \cN \rightarrow \cX$ together with isomorphisms $\eta_{m,a,n}: \cF(m \otimes a,n) \rightarrow \cF(m, a \otimes n)$ satisfying a coherence property.  Again if the balanced tensor product exists, it is unique (up to an equivalence that is unique up to unique natural isomorphism), but existence does not follow formally. 

The only task is to provide an explicit construction of the balanced tensor product that satisfies the corepresentation property.  A crucial tool for our construction is the result, proved by Etingof--Gelaki--Nikshych--Ostrik \cite[Cor. 7.10.5]{egno-book}, based on earlier work of Etingof--Ostrik \cite[\S 3.2]{EO-ftc} along the lines pioneered by Ostrik \cite[Thm 1]{MR1976459}, that any finite left module category ${}_{\cC} \cN$ over a finite tensor category $\cC$ is equivalent to the category of right module objects $\Mod{}{B}(\cC)$ in $\cC$, for some algebra object $B \in \cC$; similarly any right module category $\cM_{\cC}$ is equivalent to a category of left module objects $\Mod{A}{}(\cC)$.  

\begin{theorem*}
Let $\cC$ be a finite, rigid, monoidal linear category, and let $\cM$ be a finite right $\cC$-module category and $\cN$ a finite left $\cC$-module category.  Let $A$ and $B$ be algebra objects in $\cC$ such that $\cM \cong \Mod{A}{}(\cC)$ and $\cN \cong \Mod{}{B}(\cC)$ as module categories.  The category $\Mod{A}{B}(\cC)$ of $A$--$B$-bimodule objects in $\cC$ corepresents $\cC$-balanced right-exact bilinear functors out of $\cM \times \cN$, and therefore realizes the balanced tensor product $\cM \boxtimes_\cC \cN$.
\end{theorem*}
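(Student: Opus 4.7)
The plan is to exhibit the canonical $\cC$-balanced right-exact bilinear functor $\Phi \colon \cM \times \cN \to \Mod{A}{B}(\cC)$ and, for any such functor $(\cG,\eta^\cG) \colon \cM \times \cN \to \cX$, construct the essentially unique factorization $\widetilde{\cG} \colon \Mod{A}{B}(\cC) \to \cX$ via an explicit bar-type coequalizer. Set $\Phi(M, N) := M \otimes N$ (the tensor product in $\cC$) endowed with the natural $A$-$B$-bimodule structure from $M$'s left $A$-action and $N$'s right $B$-action, with balancing $\eta^\Phi_{M,C,N}$ given by the associator $(M\otimes C)\otimes N \xrightarrow{\sim} M\otimes(C\otimes N)$. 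Bilinearity and right-exactness of $\Phi$ reduce to right-exactness of the tensor product in $\cC$, which follows from rigidity.

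For the extension, exploit the bar resolution: any $X \in \Mod{A}{B}(\cC)$ is the reflexive coequalizer in $\Mod{A}{B}(\cC)$ of
\[
X \otimes B \otimes B \rightrightarrows X \otimes B \to X,
\]
with arrows $\id_X \otimes \mu_B$ and $\rho_X \otimes \id_B$, where $\rho_X \colon X\otimes B\to X$ is the right $B$-action. Both terms lie in the essential image of $\Phi$: viewing $X$ as an object of $\cM$ via its left $A$-module structure, one has $X\otimes B = \Phi(X, B)$ and $X\otimes B \otimes B = \Phi(X, B\otimes B) = \Phi(X\otimes B, B)$ (the two presentations of the latter being identified by the associator). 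Define
\[
\widetilde{\cG}(X) := \coeq\!\bigl( \cG(X, B\otimes B) \rightrightarrows \cG(X, B) \bigr),
\]
with first arrow $\cG(\id_X, \mu_B)$ and second arrow $\cG(\rho_X, \id_B)\circ(\eta^\cG_{X,B,B})^{-1}$. A bimodule morphism $f\colon X\to X'$ induces compatible maps between the coequalizer diagrams (commutation of the $\rho$-square uses naturality of $\eta^\cG$ in the first variable and the fact that $f$ respects the right $B$-action), making $\widetilde{\cG}$ a functor that is right-exact since $\cG$ is and coequalizers commute with colimits.

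To verify $\widetilde{\cG}\circ\Phi \cong \cG$ as balanced functors, specialize to $X = \Phi(M,N) = M\otimes N$: transporting through $\eta^\cG$ twice converts the defining coequalizer into $\cG(M,-)$ applied to the bar resolution $N\otimes B\otimes B \rightrightarrows N\otimes B \to N$ of $N\in\cN$, whose coequalizer is $N$; right-exactness of $\cG(M,-)$ then yields the claimed isomorphism, and a coherence diagram chase shows it intertwines the associator-balancing on $\widetilde{\cG}\circ\Phi$ with $\eta^\cG$. Uniqueness follows since any right-exact extension must agree with $\cG$ on the bar-resolution terms $\Phi(X, B)$ and $\Phi(X, B\otimes B)$, hence with $\widetilde{\cG}$ on all of $\Mod{A}{B}(\cC)$.

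The main obstacle is the coherent bookkeeping of $\cC$-balancing data. Proving $\widetilde{\cG}$ is functorial, that it is right-exact, and that the identification $\widetilde{\cG}\circ\Phi \cong \cG$ respects balancings each reduces to showing that the two natural presentations $\Phi(X\otimes B, B)$ and $\Phi(X, B\otimes B)$ of the free bimodule $X\otimes B\otimes B$ interact correctly with $\eta^\cG$ via the naturality and coherence axioms of a $\cC$-balanced functor---a task that is organizational rather than conceptual.
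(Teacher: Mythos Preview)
Your proof is correct and follows the same overall strategy as the paper: use the $B$-bar resolution $X \otimes B \otimes B \rightrightarrows X \otimes B \to X$ to present every bimodule as a coequalizer of objects in the image of $\Phi$, and define the extension $\widetilde{\cG}$ as the corresponding coequalizer in $\cX$. Where you diverge is in handling the fact that one of the two bar maps (namely $\rho_X \otimes \id_B$) is not literally the image under $\Phi$ of a morphism in $\cM \times \cN$. The paper resolves this by introducing an auxiliary $A$-bar resolution on the left: it transports $X$ across the balancing into the $\cN$-slot (where both bar maps become $\cN$-morphisms), but since the balancing only moves objects of $\cC$, it must first free $X$ from its $A$-module structure via the $A$-resolution, assembling everything into a large commutative diagram and defining $\overline{\delta}_X$ as the induced map on row cokernels. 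You instead keep $X$ in the $\cM$-slot and move $B$ across, observing directly that $\rho_X \colon X \otimes B \to X$ is already a morphism in $\cM = \Mod{A}{}(\cC)$ (it is a left $A$-module map by the bimodule axiom), so the second arrow is simply $\cG(\rho_X,\id_B)\circ(\eta^\cG_{X,B,B})^{-1}$. This is a genuine streamlining: it bypasses the double resolution and the large diagram at no cost in generality. The verification that $\widetilde{\cG}\circ\Phi \cong \cG$ then reduces, via the pentagon for $\eta^\cG$, to right-exactness of $\cG(M,-)$ applied to the bar resolution of $N$ in $\cN$, exactly as you outline.
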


The balanced tensor product will play a key role in our study of the $3$-category of finite tensor categories and the associated local toplogical field theories in \cite{DTCI}.  Indeed, we use at some crucial steps not only that the balanced tensor product exists, but that it can be explicitly realized as $\Mod{A}{B}(\cC)$.  Our realization also shows that the balanced tensor product of finite linear (abelian) module categories (the ``Deligne" tensor, cf.~\cite{MR1106898}) is equivalent to the balanced tensor product of finitely cocomplete $k$-additive module categories (the ``Kelly" tensor, cf.~\cite{MR651714, MR648793}).  This equivalence will allow us, in \cite{DTCI}, to work with abelian categories while relying on results from a merely additive context, particularly the Johnson-Freyd--Scheimbauer~\cite{MR3590516} construction of a 3-category of (finitely cocomplete $k$-additive) tensor categories.

The first two sections of this paper are largely expository and intended as a self-contained introduction to the parts of EGNO's theory that are needed in our other papers.  In Section~\ref{sec:tc-lincat}, we give an overview of linear categories and monoidal linear categories.  In particular, we give a proof of the well known fact that all finite linear categories are equivalent to categories of modules over a finite-dimensional algebra, and a proof of the fact that all right (respectively left) exact linear functors between finite linear categories admit right (respectively left) adjoints.  In Section~\ref{sec:tc-bimod}, we provide a review of aspects of Etingof--Gelaki--Nikshych--Ostrik's theory of module categories, including proofs that all module functors are strong and that adjunctions lift to module adjunctions, and we give a proof, of the fact that module categories are categories of module objects, that isolates and mitigates the rigidity assumption.  In Section~\ref{sec:tc-deligne}, we use the presentation of module categories as categories of modules to prove that the balanced tensor product of module categories exists, and we establish the basic exactness properties of the balanced tensor product.

\section{Linear categories and tensor categories} \label{sec:tc-lincat}

\subsection{Finite linear categories}

	Let $k$ be a fixed ground field, let $\overline{\Vect}_k$ be the category of (possibly infinite-dimensional) $k$-vector spaces, and let $\Vect_k$ be the category of finite-dimensional $k$-vector spaces.   A {\em linear category} is an abelian category with a compatible enrichment over $\overline{\Vect}_k$. 
A {\em linear functor} is an additive functor, that is also a functor of $\overline{\Vect}_k$-enriched categories. 

\begin{warning}
	In \cite{DTCI} we will use the phrase ``linear functor" to mean what we call ``right exact linear functor" in this paper.  In the $3$-category of finite tensor categories the $2$-morphisms are assumed to be right exact, because the balanced tensor product of linear categories is only functorial with respect to right exact functors.  Since this paper concerns the definition of the balanced tensor product itself, we will not use the abbreviated convention here.
\end{warning}

Recall the following standard terminology:
\begin{itemize}
	\item[-] An object of a linear category is {\em simple} if it admits no non-trivial subobjects. The endormorphism ring of any simple object is a division algebra over $k$. 
	\item[-] An object $X$ of a linear category has {\em finite length} if every strictly decreasing chain of subobjects $X = X_0 \supsetneq X_1 \supsetneq X_2 \supsetneq  \cdots$ has finite length. 
	\item[-] A linear category is {\em semisimple} if every object splits as a direct sum of simple objects. If in addition every object has finite length, then every object splits as a finite direct sum of simple objects.
	\item[-] A linear category {\em has enough projectives} if for every object $X$, there is a projective object $P$ with a surjection $P \twoheadrightarrow X$. 
\end{itemize}

\begin{definition} 
	A linear category $\cC$ is {\em finite} if 
	\begin{enumerate}
		\item[1.] $\cC$ has finite-dimensional spaces of morphisms;
		\item[2.] every object of $\cC$ has finite length;
		\item[3.] $\cC$ has enough projectives
		; and
		\item[4.] there are finitely many isomorphism classes of simple objects.  
	\end{enumerate}
\end{definition}

\begin{example}
	The category of finite dimensional vector spaces is a finite linear category. 
\end{example}

The following proposition is well-known (see for instance \cite[\S9.6]{MR2808160}), and justifies the above definition of finite.

\begin{proposition} \label{prop:finitelinearcatsasmodules}
A linear category is finite if and only if it is equivalent to the category $\Mod{A}{}$ of finite-dimensional modules over a finite-dimensional $k$-algebra $A$.
\end{proposition}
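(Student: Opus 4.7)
The plan is to prove both directions, with the easy direction being a routine verification and the substantive direction constructing an appropriate algebra from a projective generator.

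For the ``if'' direction, I would take $\cC = \Mod{A}{}$ with $\dim_k A < \infty$ and verify the four finiteness axioms directly: morphism spaces are subspaces of $\Hom_k(M, N)$ for finite-dimensional $M, N$; a finite-dimensional module over an artinian ring has finite length; every $M$ is a quotient of $A^{\oplus n}$ for some $n$, and $A$ itself is projective, so there are enough projectives; and the simple $A$-modules are the simple quotients of $A/\mathrm{rad}(A)$, which is a finite-dimensional semisimple algebra with only finitely many simple modules.

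For the ``only if'' direction, the strategy is to exhibit a projective generator and take its endomorphism algebra. Let $S_1, \ldots, S_n$ be a set of representatives for the isomorphism classes of simple objects of $\cC$. Since $\cC$ has enough projectives and every object has finite length, each $S_i$ admits a projective cover $P_i$ (one surjects onto $S_i$ by a projective $Q$, and uses the finite length of $Q$ to extract a minimal such cover). Set $P = \bigoplus_{i=1}^n P_i$ and $A = \End_\cC(P)^{\op}$; then $A$ is finite-dimensional because each $\Hom_\cC(P_i, P_j)$ is. I would then define the functor
\[ F = \Hom_\cC(P, -): \cC \longrightarrow \Mod{A}{} \]
and show it is an equivalence of linear categories.

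The main obstacle, and the heart of the argument, is verifying that $F$ is fully faithful and essentially surjective. Fully faithfulness on $P$ itself holds by definition of $A$, and extends by additivity to all of $\mathrm{add}(P)$ (finite direct sums of summands of $P$). To push this to arbitrary objects $X$ of $\cC$, I would use that $P$ is a generator: every $X$ admits a projective presentation $P^{\oplus m} \to P^{\oplus \ell} \to X \to 0$ (surjectivity from $P^{\oplus \ell}$ exists because the projective cover of $X$ is a summand of some $P^{\oplus \ell}$, which in turn follows from the fact that every simple subquotient of $X$ appears among the $S_i$), and then apply the five lemma after noting that $F$ is right exact since $P$ is projective. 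For essential surjectivity, every finite-dimensional $A$-module $M$ has a presentation $A^{\oplus m} \to A^{\oplus \ell} \to M \to 0$; lifting the map $A^{\oplus m} \to A^{\oplus \ell}$ via fullness to a map $P^{\oplus m} \to P^{\oplus \ell}$ in $\cC$ and taking its cokernel produces an object $X$ with $F(X) \cong M$. The delicate point throughout is keeping track of the finiteness hypotheses — finite length plus finitely many simples plus enough projectives are exactly what is needed for projective covers to exist and for $P$ to be a genuine generator of the whole category.
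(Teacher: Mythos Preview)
Your proposal is correct and follows essentially the same strategy as the paper: build $P$ as a direct sum of projectives surjecting onto each simple, set $A = \End_\cC(P)^{\op}$, and show $\Hom_\cC(P,-)$ is an equivalence. The differences are in the mechanics of that last verification.

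You argue fully faithful plus essentially surjective via projective presentations $P^{\oplus m} \to P^{\oplus\ell} \to X \to 0$, invoking projective covers to ensure such presentations exist. The paper instead works with the explicit adjunction $P \otimes_A (-) \dashv \Hom_\cC(P,-)$ and checks that the unit and counit are isomorphisms; the counit check reduces to showing $\Hom_\cC(P,-)$ reflects isomorphisms, which is proved by induction on length from the base case that $\Hom_\cC(P,S_i) \neq 0$ for each simple. This avoids any appeal to the existence of projective covers (which, while true in a finite linear category, requires a Krull--Schmidt argument you did not supply). Your approach is perfectly standard and would work once that gap is filled, but the paper's route is slightly more self-contained. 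For the ``if'' direction, the paper also takes a different tack: rather than verifying the four axioms by hand, it applies a general transfer lemma (Lemma~\ref{lem:recognizefinitecats}) along the free--forgetful adjunction from $\Vect_k$.
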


We will first prove two lemmas.

\begin{lemma}\label{lem:fiaghtfulrightadjoint}
	Let $F: \cC \leftrightarrows \cD: U$ be an adjunction between abelian categories. Then the right adjoint $U$ is faithful if and only if the counit $FU(X) \to X$ is a surjection for every object $X \in \cD$. If in addition $U$ is exact, then $U$ reflects isomorphisms. 
\end{lemma}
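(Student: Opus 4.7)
The plan is to handle the two parts of the lemma separately, with the second building on the first.

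For the biconditional, I would exploit the standard translation of the action of $U$ on Hom-sets through the adjunction. Given $f \colon X \to Y$ in $\cD$, the morphism $Uf \colon UX \to UY$ corresponds under the adjunction bijection $\Hom_\cC(UX, UY) \cong \Hom_\cD(FUX, Y)$ to the composite $f \circ \epsilon_X$, where $\epsilon$ is the counit (this is immediate from the triangle identities and naturality of $\epsilon$). Hence the functorial map $\Hom_\cD(X, Y) \to \Hom_\cC(UX, UY)$ factors as the precomposition $f \mapsto f \circ \epsilon_X$ followed by an isomorphism. So $U$ is faithful if and only if precomposition with $\epsilon_X$ is injective on $\Hom_\cD(X, Y)$ for every $X$ and every $Y$, which is by definition the statement that every $\epsilon_X$ is an epimorphism. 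In an abelian category, an epimorphism is the same thing as a surjection, giving the desired equivalence.

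For the second claim, I would assume that $U$ is both faithful and exact and let $f \colon X \to Y$ be a morphism in $\cD$ whose image $Uf$ is an isomorphism. Exactness of $U$ yields $U(\ker f) \cong \ker(Uf) = 0$ and $U(\coker f) \cong \coker(Uf) = 0$, so it suffices to know that $U$ reflects the zero object. But this follows from faithfulness: if $U(K) = 0$, then the two parallel morphisms $\id_K$ and $0_K$ in $\Hom_\cD(K,K)$ have the same image $0$ under $U$, so $\id_K = 0_K$, forcing $K = 0$. Consequently $\ker f = 0$ and $\coker f = 0$, and in an abelian category a morphism with vanishing kernel and cokernel is an isomorphism.

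There is no real obstacle here; the only genuinely substantive observation is the reformulation of $U$'s action on Hom-sets as precomposition with the counit, a standard trick for working with adjunctions. The rest is routine bookkeeping in an abelian category, together with the elementary fact that a faithful functor reflects the zero object.
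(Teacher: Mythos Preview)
Your proof is correct. For the biconditional, you take a slightly more streamlined route than the paper: you observe once and for all that the map $U \colon \Hom_\cD(X,Y) \to \Hom_\cC(UX,UY)$ is, up to the adjunction isomorphism, precomposition with $\epsilon_X$, which immediately gives both implications at once. The paper instead treats the two directions separately---one by a direct naturality computation, the other by looking at the cokernel of $\epsilon_X$ and using that $U(\epsilon_X)$ is split epi via the triangle identity. Your argument is cleaner, though both are standard. For the second statement, your argument (exactness gives $U(\ker f)=0$ and $U(\coker f)=0$, faithfulness reflects zero objects) is essentially the same as the paper's, which phrases it in terms of the kernel and cokernel \emph{maps} being zero rather than the objects.
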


\begin{proof}
	The functor $U$ is faithful precisely when $U(f) = 0$ implies $f=0$ for all morphisms $f: X \to Y$ in $\cD$. Suppose that the counit $\varepsilon_X: FU(X) \to X$ is a surjection for every object $X \in \cD$ and let $f: X \to Y$ be a morphism such that $U(f): U(X) \to U(Y)$ is the zero morphism. Then the composite $FU(X) \to FU(Y) \to Y$ is the zero morphism. Since the counit is natural, this is the same as the composite $FU(X) \to X \to Y$, hence this composite is also the zero morphism. Now since $FU(X) \to X$ is surjective, the original map $f: X \to Y$ must be the zero morphism. 
	
	In the other direction, suppose that $U$ is faithful, and fix an object $X \in \cD$. Let $f: X \to C$ be the cokernel of the counit map $\varepsilon_X: FU(X) \to X$. We wish to show that the cokernel is zero.  Since the composite $f \circ \varepsilon_X =0 $, we have $U(f) \circ U(\varepsilon_X) = 0$. However $U(\varepsilon_X): UFU(X) \to U(X)$ is split (by the unit $\eta_{UX}$ of the adjunction) and hence is surjective, which implies that $U(f) = 0$. Since $U$ is faithful, we have that $f=0$ and so the cokernel was, in fact, zero as desired.   
	
For the last statement, let $f: X \to Y$ be a map, with kernel and cokernel sequence $K \to X \to Y \to C$, and assume that $U(f): U(X) \to U(Y)$ is an isomorphism. Then, since $U$ is exact, the maps $U(K) \to U(X)$	    and $U(Y) \to U(C)$ are zero maps. Hence, since $U$ is faithful, the kernel and cokernel of $f$ were zero maps. 
\end{proof}

\begin{lemma}\label{lem:recognizefinitecats}
	Let $F: \cC \leftrightarrows \cD: U$ be an adjunction between linear categories in which $U$ and $F$ are linear functors, and where $U$ is exact and faithful. Suppose that $\cC$ is finite, then $\cD$ is also finite. 
\end{lemma}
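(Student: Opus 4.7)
The plan is to verify, in order, the four defining properties of a finite linear category for $\cD$, using Lemma~\ref{lem:fiaghtfulrightadjoint} and general properties of adjunctions. Finite-dimensionality of morphism spaces is immediate: faithfulness of $U$ gives an injection $\Hom_\cD(X,Y) \hookrightarrow \Hom_\cC(U(X), U(Y))$, and the target is finite-dimensional since $\cC$ is finite. For the finite-length axiom, Lemma~\ref{lem:fiaghtfulrightadjoint} says that $U$ reflects isomorphisms; combining this with exactness, any strictly descending chain of subobjects in $\cD$ maps under $U$ to a strictly descending chain in $\cC$ (a proper subobject inclusion stays proper, since if it became an isomorphism after $U$ it was already one). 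Since every object of $\cC$ has finite length, the same holds for every object of $\cD$.

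For enough projectives, the first observation is that $F$ sends projectives of $\cC$ to projectives of $\cD$: the adjunction identity $\Hom_\cD(F(P), -) \cong \Hom_\cC(P, U(-))$ together with exactness of $U$ shows that the left side is exact whenever $P$ is projective. Given an arbitrary $X \in \cD$, choose a projective surjection $P \twoheadrightarrow U(X)$ in $\cC$ and apply $F$; since $F$ is a left adjoint it is right exact, hence preserves epimorphisms, and the counit $\varepsilon_X \colon FU(X) \to X$ is an epimorphism by Lemma~\ref{lem:fiaghtfulrightadjoint}. The composite $F(P) \twoheadrightarrow FU(X) \twoheadrightarrow X$ then exhibits $X$ as a quotient of a projective in $\cD$.

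The step I expect to require the most thought is bounding the number of simple isomorphism classes in $\cD$. My strategy is to produce a single object of $\cD$ that surjects onto every simple. Choose a projective generator $P_\cC$ of $\cC$ (for instance, the direct sum of projective covers of the finitely many simples of $\cC$), and set $P_\cD := F(P_\cC)$. Then $P_\cD$ is projective by the preceding argument, and $\Hom_\cD(P_\cD, X) \cong \Hom_\cC(P_\cC, U(X))$ is nonzero whenever $X \in \cD$ is nonzero, since $U(X) \neq 0$ by faithfulness of $U$ and $P_\cC$ is a generator of $\cC$. Consequently every simple object of $\cD$ is a quotient of $P_\cD$. Since the finite-length property has already been established for all objects of $\cD$, in particular $P_\cD$ has finite length, and therefore admits only finitely many isomorphism classes of simple quotients, completing the verification of all four finiteness axioms.
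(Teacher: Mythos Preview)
Your proof is correct and follows essentially the same route as the paper's: the first three axioms are verified identically, and for the fourth you both produce a single finite-length object of $\cD$ whose composition factors exhaust the simples. The only cosmetic difference is that you apply $F$ to a projective generator of $\cC$ while the paper applies $F$ to the direct sum of the simples of $\cC$; either choice works, and both arguments finish via Jordan--H\"older.
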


\begin{proof}
	Since $U$ is faithful, the morphism spaces in $\cD$ are subspaces of the morphism spaces of $\cC$, hence finite dimensional. Since $U$ is a right adjoint, it preserves sub-objects. Thus $U$ sends a decreasing chain of  subobjects to a decreasing chain of subobjects. Since $U$ is exact and faithful, it reflects isomorphisms, and hence $U$ also preserves {\em strictly} decreasing chains of subobjects. Since every such chain in $\cC$ has finite length, the same is true in $\cD$. Let $X \in \cD$ be an object, and let $P \twoheadrightarrow U(X)$ be a surjection in $\cC$ from a projective object. Since $F$ is a left adjoint, it preserves surjections, and since $U$ is faithful (and by Lemma~\ref{lem:fiaghtfulrightadjoint}), the composite
	\begin{equation*}
		F(P) \to FU(X) \to X
	\end{equation*}
	is surjective. Moreover $\cD(F(P), -) \cong \cC(P, U(-))$ is exact, and hence $F(P)$ is projective. Thus $\cD$ also has enough projectives. 
	
Now suppose that $X \in \cD$ is a non-zero object. Since $U$ reflects isomorphisms, $U(X)$ is also non-zero, and hence there exists a non-zero morphism $f: S \to U(X)$ where $S$ is some simple object of $\cC$. The adjoint of this map is the unique map $\overline{f}: F(S) \to X$ such that $f$ factors as $S \to UF(S) \to U(X)$, where the second map is $U(\overline{f})$. Hence, since $f$ is non-zero, $\overline{f}$ must also be non-zero.  Now let $W = \oplus S_i$ be the direct sum of representatives from each of the finitely many isomorphism classes of simple objects of $\cC$. We have shown that for every object $X \in \cD$, there exists a non-zero morphism $F(W) \to X$. If $X$ is simple, then a non-zero morphism is necessarily a surjection. In particular it follows that every simple object of $\cD$ occurs as a simple factor in some composition series for $F(W)$. Since $F(W)$ is finite length and by the Jordan-H\"older theorem, any two composition series have the same simple factors up to permutation and isomorphism, and hence there are finitely many isomorphism classes of simple objects in $\cD$.  
\end{proof}


\begin{proof}[Proof of Prop.~\ref{prop:finitelinearcatsasmodules}]
	Let $A$ be a finite-dimensional $k$-algebra and consider the linear category $\Mod{A}{}$ of finite-dimensional left $A$-modules. The linear category $\Vect_k$ is finite and the  free-forgetful adjunction $A\otimes(-): \Vect_k \leftrightarrows \Mod{A}{}: U$ satisfies the conditions of Lemma~\ref{lem:recognizefinitecats}. Hence the linear category $\Mod{A}{}$ is finite.

	
	
	Now assume that $\cC$ is a finite linear category. Let $\{X_i\}$ be a set of representatives for the isomorphism classes of simples. Let $P_i \to X_i$ be a surjection, with $P_i$ projective, let $P = \oplus P_i$, and let $A = \Hom_{\cC}(P,P)$. As the morphism spaces of $\cC$ are finite-dimensional, $A$ is a finite-dimensional algebra.  (Note that here the algebra structure on $A$ is defined by $a \cdot b := b \circ a$.)
	
We have an adjunction, which we will show is an equivalence:
	\begin{equation*}
		P \otimes_A (-):\Mod{A}{} \leftrightarrows \cC: \Hom_{\cC}(P,-).
	\end{equation*}
	The left adjoint is given by $P \otimes_A M := \coeq\{ P \otimes A \otimes M \rightrightarrows P \otimes M\}$.  In order to show that this adjunction is an equivalence we need only show that the unit and counit maps are isomorphisms.
Because $P$ is a finite length projective module (and hence a summand of a finite rank free module), we have an isomorphism
\begin{equation*}
	\Hom_{\cC}(P, P \otimes_A M) \cong \Hom_{\cC}(P, P) \otimes_A M \cong M.
\end{equation*}
The composition of the unit map $M \rightarrow\Hom_{\cC}(P, P \otimes_A M)$ with this isomorphism is the identity, hence the unit map is an isomorphism. It only remains to show that the counit 
\begin{equation*}
	ev:P \otimes_A \Hom_{\cC}(P, X) \to X
\end{equation*}
is an isomorphism for every $X$. The counit becomes an isomorphism after applying $\Hom_{\cC}(P,-)$, and so the desired result would follow if we knew $\Hom_{\cC}(P,-)$ reflects isomorphisms.

As $P$ is projective, the functor $\Hom_{\cC}(P,-)$ is exact, and so the fact that it reflects isomorphisms is equivalent to that statement that for all $X$, 
\begin{equation*}
	\Hom_{\cC}(P,X) \cong 0 \quad \textrm{ if and only if } \quad X \cong 0.
\end{equation*} 
By construction this holds for all objects $X$ of length at most 1. We prove that it holds for all objects by induction on the length. 

Suppose that $X$ is an object of $\cC$ and, by induction, that for all objects $Y$ with length strictly less than the length of $X$, we know $\Hom_{\cC}(P,Y) \cong 0$ if and only if $Y \cong 0$. By assumption there exists an exact sequence in $\cC$
\begin{equation*}
	0 \to X' \to X \to X'' \to 0
\end{equation*}
with $X''$ simple, and with the length of $X'$ strictly less than the length of $X$. We obtain an exact sequence:
\begin{equation*}
	0 \to \Hom_{\cC}(P,X') \to \Hom_{\cC}(P,X) \to \Hom_{\cC}(P,X'') \to 0.
\end{equation*}
If the middle term is zero, then all terms vanish. By our induction hypothesis, we conclude that $X'' \cong X' \cong 0$, and hence $X$ itself was zero. Thus $\Hom_{\cC}(P,-)$ reflects isomorphisms, as required.
\end{proof}

\subsection{Adjoints and representability of linear functors}

A property of finite linear categories is that they satisfy the following analog of the adjoint functor theorem.  Although this result is well-known, we were unable to find a proof in the literature.  

\begin{proposition} \label{prop:AFT}
	Let $\cC$ and $\cD$ be finite linear categories and let $G: \cC \rightarrow \cD$  be a linear functor. Then the following conditions are equivalent: 
	\begin{enumerate}
		\item $G$ is left exact;  
		\item $G$ is left exact and satisfies the following {\em solution-set condition:} \\  For each $d \in \cD$ there is a finite set $I$ and a collection of arrows $f_i:d \to G(c_i)$ such that every arrow $h:d \to G(c)$ can be written as a composite $h = G(t) \circ f_i$ for some index $i \in I$ and some $t: c_i \to c$; 
		\item $G$ admits a linear left adjoint.
	\end{enumerate}
\end{proposition}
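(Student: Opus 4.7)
The plan is to establish $(2)\Rightarrow(1)$ trivially, $(3)\Rightarrow(2)$ via a one-element solution set, and then the substantive implication $(1)\Rightarrow(3)$ by constructing the left adjoint explicitly using an injective cogenerator of $\cC$.

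The easy directions are quick. $(2)\Rightarrow(1)$ is built into the statement of~(2). For $(3)\Rightarrow(2)$, if $F \adj G$ with unit $\eta_d\colon d\to GF(d)$, then every $h\colon d\to G(c)$ factors as $h = G(\tilde h)\circ \eta_d$ where $\tilde h\colon F(d)\to c$ is the mate of $h$, so the singleton $\{(F(d),\eta_d)\}$ is a solution set.

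For the main implication $(1)\Rightarrow(3)$, I would invoke Proposition~\ref{prop:finitelinearcatsasmodules} to identify $\cC \simeq \Mod{A}{}$ for a finite-dimensional $k$-algebra $A$, and exploit the injective cogenerator $I := \Hom_k(A,k) \in \cC$ (with left $A$-module structure dual to the right regular action). Two facts about this $I$ are key: every $c\in \cC$ admits a finite injective copresentation $0 \to c \to I^n \to I^m$ (since every finite-length module embeds in a finite power of $I$), and there is a canonical ring isomorphism $\End_\cC(I) \cong A^\op$, given by $b\mapsto \psi_b$ with $\psi_b(f)(x) = f(bx)$. Consequently, for each $d\in \cD$ the finite-dimensional vector space $W_d := \cD(d, G(I))$ inherits a natural right $A$-module structure through the ring homomorphism $A^\op \cong \End_\cC(I) \to \End_\cD(G(I))$. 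I then set $F(d) := \Hom_k(W_d, k)$, which is a finite-dimensional left $A$-module and thus an object of $\cC$; clearly $F$ is a linear covariant functor of $d$.

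To verify the adjunction $\cC(F(d), c)\cong \cD(d, G(c))$ I would evaluate both sides on the injective copresentation $0\to c\to I^n\to I^m$. Left exactness of $G$ and of $\cD(d,-)$ identifies the right hand side with $\ker(W_d^n \to W_d^m)$. On the other side, left exactness of $\Hom_A(F(d), -)$ combined with the Matlis-type identity $\Hom_A(F(d), I) \cong \Hom_k(F(d), k) \cong W_d$ (valid because $W_d$ is finite-dimensional) identifies the left hand side with the kernel of the same map. Both identifications are natural in $c$ and $d$ by construction, producing the desired adjunction isomorphism. The principal bookkeeping task is to track the right/left $A$-module structures through the $k$-dualities $W_d \leftrightarrow F(d)$ and $\Hom_A(F(d), I)\cong W_d$, ensuring that the comparison is natural in both arguments and not merely a pointwise $k$-linear match.
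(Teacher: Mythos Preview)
Your proof is correct, but takes a genuinely different route from the paper's. The paper proves the cycle $(3)\Rightarrow(1)\Rightarrow(2)\Rightarrow(3)$: the first implication is the standard ``right adjoints preserve limits''; the substantive step $(1)\Rightarrow(2)$ shows the solution-set condition is automatic by bounding the class $S_d$ of objects of $(d\downarrow G)$ that are ``generated by $d$'' inside a single fixed object built from the simples of $\cC$; finally $(2)\Rightarrow(3)$ runs the classical Freyd argument, constructing an initial object of $(d\downarrow G)$ as an equalizer of endomorphisms of the product of the solution set. Your argument instead goes $(1)\Rightarrow(3)$ directly, via the explicit Matlis-dual formula $F(d)=\cD(d,G(I))^\ast$ using the injective cogenerator $I=A^\ast$, and never engages with the comma category or the solution-set condition at all. (One small presentational point: in your $(3)\Rightarrow(2)$ you supply the singleton solution set but should also remark that $G$, being a right adjoint, is left exact---condition~(2) demands both.)

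What each approach buys: your construction is more concrete and yields a closed formula for the adjoint, but it leans on the presentation $\cC\simeq\Mod{A}{}$ from Proposition~\ref{prop:finitelinearcatsasmodules}, and hence on all four finiteness axioms for $\cC$. The paper's comma-category argument is less explicit but, as the authors observe in the remark following their proof, works under weaker hypotheses: neither $\cC$ nor $\cD$ needs enough projectives, and $\cD$ need not have finitely many simples or finite-length objects. So the paper's route isolates the necessary hypotheses more sharply, while yours delivers the adjoint in closed form when full finiteness is available.
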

\begin{proof} This is a variation of the proof (in the non-linear setting) given in \cite[V.6.Thm 2]{MR0354798} (see also \cite[Ex. 3-M]{MR0166240}).

($3 \Rightarrow 1$) Suppose that $G$ admits a left adjoint $F$. Then $G$ is itself a right adjoint and hence commutes with all limits. In particular $G$ is a left exact functor. 

($2 \Rightarrow 3$) Note that any left adjoint $F$ to $G$ commutes with coproducts, so is an additive functor.  The adjoint $F$ commutes with colimits and so is also right exact.  Finally the left adjoint $F$ is ${\Vect_k}$-enriched if the natural isomorphism of abelian groups $\hom( F(x), y) \cong \hom(x, G(y))$ is compatible with the scalar multiplication by $k$. That follows by naturality and the fact that $G$ is ${\Vect_k}$-enriched.

To construct a left adjoint for $G$, it suffices (and is necessary) to construct for each $d \in \cD$ a universal arrow $d \to G(c)$, that is an initial object of the comma category $(d \downarrow G)$; the left adjoint may then be constructed pointwise as $F(d) = c$. To this end, fix $d \in \cD$ and suppose that $G$ satisfies the solution-set condition. Define the element $w$ of the comma category as the product of the elements $d \to G(c_i)$. Since $G$ is left exact, it commutes with finite limits, and hence the forgetful functor $(d \downarrow G) \to \cC$ creates finite limits. In particular the comma category has all finite limits.  Thus the product of the elements $d \rightarrow G(c_i)$ exists and is given explicitly by
\begin{equation*}
	w := \bigoplus_i f_i :  d \to G( \oplus_i c_i) = \bigoplus_i G(c_i).
\end{equation*}
The morphism spaces of $(d \downarrow G)$ are finite-dimensional vector spaces, and so we may choose a finite basis for $\Hom(w,w)$. Let $v$ be defined as the equalizer of this finite collection of maps together with the zero map. Again, this finite limit exists and may be created in $\cC$. Note that $v$ consequently equalizes the zero map and any linear combination of the basis maps. Thus $v$ equalizes {\em all} endomorphisms of $w$, and hence $v$ is initial; see  \cite[V.6.Thm 1]{MR0354798}. 

($1 \Rightarrow 2$) Finally, suppose that $G$ is left exact. Fix an object $d \in \cD$ and define a class $S_d$ of objects of $(d \downarrow G)$ as follows. An object $(c, f:d \to G c)$ belongs to $S_d$ if and only if for any sub-object $c' \subseteq c$ and factorization $d \to G c' \to G c$ it follows that either $c' \cong 0$ or the inclusion is an isomorphism $c' \cong c$.  The class $S_d$ consists of, in the language of \cite[Ex. 3-J]{MR0166240}, those objects $(c,f)$ that are {\em generated} by $d$. If there is a finite set $I$ of isomorphism classes of objects of $S_d$ then a choice of representatives $\{(c_i, f_i)\}_{i \in I}$ forms a collection satisfying the solution-set condition.

We must show that the class $S_d$ has finitely many isomorphism classes of objects. Let $q \in \cC$ denote the direct sum of representatives of the isomorphism classes of simple objects in $\cC$. Choose a basis $\{e_j\}$ for the vector space $\hom_{\cD}(d, G q)$, and consider the object $x := \oplus_j( q, e_j) \bigoplus (q,0) \in (d \downarrow G)$. We will show that every object of $S_d$ is isomorphic to a subobject of $x$, and hence there are only a finite number of isomorphism classes of objects of $S_d$. Note that because every object of $\cC$ has finite length, $q$ is a {\em cogenerator} of $\cC$, in the sense that the functor $\hom(-, q)$ is faithful. Let $(c, f)$ be an element of $S_d$. If $f: d \to G c$ is the zero map then $c$ is necessarily simple in $\cC$. In this case $(c, f)$ is a subobject of $(q, 0)$, and hence of $x$. 

Thus we may assume, without loss of generality, that  $(c, f)$ is an element of $S_d$ in which $f$ is not the zero map. In this case consider the map of vector spaces
\begin{equation*}
	\hom_{\cC}(c,q) \stackrel{G}{\to} \hom_{\cD}(G c, G q) \stackrel{f^*}{\to} \hom_{\cD}(d, G q).
\end{equation*} 
If $g: c \to q$ is in the kernel of the above composite map, then, by the left exactness of $G$, the kernel $\ker g$ is a subobject of $c$ admitting a factorization $d \to G(\ker g) \to G c$. Hence, by the defining property of $S_d$, we have either $\ker g = 0$ or $\ker g = c$. The latter case only occurs when $g$ is the zero map. In the former case, in which $g$ is injective, we have that $G(g)$ is also injective. Thus since $G(g) \circ f = 0$, it follows that $f = 0$, a case we have ruled out by assumption. We conclude that the map $f^* \circ G$ is an inclusion. Choose a basis  $\{ \epsilon_i \}$ for $\hom_{\cC}(c,q)$, and let $(a_{ij})$ be the matrix coefficients for the map $f^* \circ G$ in the bases $\{ \epsilon_i \}$ and $\{ e_j \}$. Since the map $f^* \circ G$ is an inclusion and $q$ is a cogenerator, the natural map
\begin{equation*}
	c \stackrel{\oplus \epsilon_i}{\to} \oplus_i q \stackrel{(a_{ij}) }{\to} \oplus_j q
\end{equation*} 
is a monomorphism in $\cC$. This monomorphism exhibits $(c,f)$ as a subobject of $x = \oplus_j( q, e_j) \bigoplus (q,0)$. 
\end{proof}

\begin{remark}
	The statement of the above proposition assumes that $\cC$ and $\cD$ are finite linear categories as this is the only case we will use. However the above proof reveals that the proposition holds under weaker assumptions. It is not necessary for either $\cC$ or $\cD$ to have enough projectives, and moreover we do not need $\cD$ to have finitely many isomorphisms classes of simple objects, nor to have only objects of finite length. As written we do need $\cD$ to be enriched in finite-dimensional vector spaces. Of course other variations of the proposition are possible. 
\end{remark}

\begin{corollary}
	A right exact linear functor between finite linear categories always admits a right adjoint (which is left exact but may not be right exact). A left exact linear functor between finite linear categories always admits a left adjoint (which is right exact but may not be left exact). 
\end{corollary}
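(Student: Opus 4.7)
The plan is to deduce both halves of the corollary from Proposition~\ref{prop:AFT}. The second statement, for a left exact linear functor $G: \cC \to \cD$, is literally the implication $(1) \Rightarrow (3)$ of that proposition, so nothing further is required beyond citing it. That the resulting left adjoint is right exact is an instance of the general fact that any left adjoint preserves colimits (in particular cokernels); the parenthetical caveat that it may fail to be left exact is illustrated by standard examples such as the tensor product with a non-flat module, which I would not belabor.

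For the first statement, given a right exact linear functor $F: \cC \to \cD$, my plan is to apply Proposition~\ref{prop:AFT} to the opposite functor $F^{\op}: \cC^{\op} \to \cD^{\op}$. First I would verify that the opposites $\cC^{\op}$ and $\cD^{\op}$ are again finite linear categories. By Proposition~\ref{prop:finitelinearcatsasmodules}, $\cC$ is equivalent to $\Mod{A}{}$ for some finite-dimensional algebra $A$, and $k$-linear duality $\Hom_k(-,k)$ identifies $(\Mod{A}{})^{\op}$ with the category of finite-dimensional left $A^{\op}$-modules; since $A^{\op}$ is again a finite-dimensional $k$-algebra, a second application of Proposition~\ref{prop:finitelinearcatsasmodules} shows that $\cC^{\op}$ is finite linear, and likewise for $\cD^{\op}$. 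Next I would observe that right exactness of $F$ translates precisely to left exactness of $F^{\op}$: cokernels in $\cC$ are kernels in $\cC^{\op}$, and $F$ preserving the former is the same as $F^{\op}$ preserving the latter. Proposition~\ref{prop:AFT} then supplies a linear left adjoint to $F^{\op}$, which unpacks to a linear right adjoint $G: \cD \to \cC$ to $F$. That $G$ is left exact follows from the general fact that right adjoints preserve limits.

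I do not anticipate any real obstacle; the content of the corollary is simply the observation that Proposition~\ref{prop:AFT} is self-dual, once one checks that the class of finite linear categories is closed under passage to the opposite category. The only mildly substantive step is this closure property, and it is immediate from the algebraic model provided by Proposition~\ref{prop:finitelinearcatsasmodules}.
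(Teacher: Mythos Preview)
Your proposal is correct and follows exactly the paper's approach: the second statement is immediate from Proposition~\ref{prop:AFT}, and the first follows by passing to opposite categories. The paper's proof is a terse two lines; your extra care in verifying that $\cC^{\op}$ is again finite linear (via the equivalence with $\Mod{A^{\op}}{}$ from Proposition~\ref{prop:finitelinearcatsasmodules}) is a detail the paper leaves implicit but is worth making explicit, since ``enough projectives'' in $\cC^{\op}$ is ``enough injectives'' in $\cC$ and is not part of the definition.
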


\begin{proof}
	The second statement is just a portion of the above proposition; the first follows by passing to the opposite linear category.  
\end{proof}

\begin{corollary} \label{cor:representable}
If $\cC$ is a finite linear category, then a functor $G: \cC^\textrm{op} \to \Vect_k$ is representable, that is $G(-) \cong \Hom_{\cC}(-,x)$ for some $x \in \cC$, if and only if $G$ is left exact. 
\end{corollary}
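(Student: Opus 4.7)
The $(\Leftarrow)$ direction is routine: any representable functor $\Hom_{\cC}(-,x) : \cC^{\textrm{op}} \to \Vect_k$ sends colimits in $\cC$ to limits in $\Vect_k$, and in particular is left exact.

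For the $(\Rightarrow)$ direction, my plan is to reduce the representability statement to the preceding corollary on existence of adjoints. First I would observe that $\cC^{\textrm{op}}$ is again a finite linear category: by Proposition~\ref{prop:finitelinearcatsasmodules} we may write $\cC \simeq \Mod{A}{}$ for some finite-dimensional algebra $A$, and then $k$-linear duality $M \mapsto \Hom_k(M,k)$ exhibits $\cC^{\textrm{op}} \simeq \Mod{A^{\textrm{op}}}{}$, which is again finite linear. Applying the preceding corollary to the left exact linear functor $G : \cC^{\textrm{op}} \to \Vect_k$ between finite linear categories then produces a linear left adjoint $F : \Vect_k \to \cC^{\textrm{op}}$.

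To finish, I set $x := F(k) \in \cC^{\textrm{op}} = \cC$. The adjunction, together with the tautological isomorphism $\Hom_{\Vect_k}(k,V) \cong V$, gives natural isomorphisms
\[
G(c) \;\cong\; \Hom_{\Vect_k}(k,\, G(c)) \;\cong\; \Hom_{\cC^{\textrm{op}}}(F(k),\, c) \;=\; \Hom_{\cC}(c,\, x),
\]
which exhibits $G \cong \Hom_{\cC}(-,x)$ as claimed.

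The one point requiring care is the verification that $\cC^{\textrm{op}}$ is a finite linear category; this amounts to the existence of enough injectives in $\Mod{A}{}$ for a finite-dimensional algebra $A$, which follows from the duality with $\Mod{A^{\textrm{op}}}{}$ (or from the standard injective envelope construction for finite-dimensional modules). Alternatively, and more cleanly, one can bypass this altogether by invoking the Remark following Proposition~\ref{prop:AFT}: the proof of the adjoint functor theorem there requires only that the source be enriched in finite-dimensional vector spaces, with no need for enough projectives or finite length or finitely many simples on the target, and so it applies directly to $G : \cC^{\textrm{op}} \to \Vect_k$ regardless of whether $\cC^{\textrm{op}}$ is finite in the strongest sense.
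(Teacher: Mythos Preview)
Your argument is correct and is essentially the paper's own proof: apply the adjoint functor theorem (Proposition~\ref{prop:AFT}) to the left exact functor $G:\cC^{\op}\to\Vect_k$, obtain a left adjoint $F$, and represent $G$ by $F(k)$. Your discussion of why $\cC^{\op}$ is again finite (via $k$-linear duality $\Mod{A}{}\simeq(\Mod{A^{\op}}{})^{\op}$, or alternatively via the Remark after Proposition~\ref{prop:AFT}) is a point the paper passes over silently, so if anything you are being more careful. One trivial slip: you have the implication labels reversed---the ``routine'' direction you begin with (representable $\Rightarrow$ left exact) is the $(\Rightarrow)$ direction of the biconditional as stated, not $(\Leftarrow)$.
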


\begin{proof}
	The represented functor $\Hom_{\cC}(-, x):\cC^\text{op} \to \Vect_k$ sends all limits in $\cC^\text{op}$ (i.e. colimits in $\cC$) to limits in $\Vect_k$. In particular it is left exact. 
%
%
Conversely, if $G: \cC^\text{op} \to \Vect_k$ is left exact, then by the above proposition it admits a left adjoint $F$. Thus for every every object $c \in \cC$ we have a natural isomorphism
	\begin{equation*}
		G(c) \cong \Hom_{\Vect_k}( k, G(c)) \cong \Hom_{\cC^\op}( F(k), c) = \Hom_{\cC}(c, F(k) ).
	\end{equation*}
	In other words,  the object $F(k)$ represents the functor $G$. 
\end{proof}

\subsection{Tensor categories}


Let $\cC$ be monoidal category. $\cC^\mp$ will denote its {\em monoidal opposite}; this has the same underlying category as $\cC$, but $x \otimes^{\cC^\mp} y : = y \otimes^{\cC} x$. 

\begin{definition} \label{def:rigid}
	An object $x \in \cC$ admits a {\em right dual} if there exists an object $x^*$ and morphisms, the {\em coevaluation} $\eta: 1 \to x \otimes x^*$ and the {\em evaluation} $\varepsilon: x^* \otimes x \to 1$, satisfying the following pair of `zigzag' equations:
	\begin{align*}
		(\id_{x} \otimes \varepsilon  ) \circ (  \eta \otimes \id_{x}) &= \id_{x} \\
		(\varepsilon \otimes \id_{x^*}) \circ (\id_{x^*} \otimes \eta) &= \id_{x^*};
	\end{align*}
	when these equations are satisfied, the object $x$ is called a left dual of the object $x^*$.  A monoidal category $\cC$ is {\em rigid} if each object of $\cC$ and each object of $\cC^\mp$ admit right duals, in other words if each object of $\cC$ admits both a right and a left dual. 
\end{definition}

\begin{definition}
	A {\em linear monoidal category} is a monoidal category $\cC$ such that $\cC$ is a linear category and the functor $\otimes$ is bilinear.  A {\em tensor category} is a rigid linear monoidal category.  A {\em finite tensor category} is a finite rigid linear monoidal category.
\end{definition}

\nid Here by bilinear, or more generally multilinear, we mean the following.  If $\{\cM_\alpha\}$ denotes a collection of linear categories then a {\em multilinear functor} from $\{\cM_\alpha\}$ into a linear category $\cN$ is a functor $F: \prod \cM_\alpha \to \cN$ such that $F$ is linear in each variable separately.


\begin{example}
	The category $\Vect_k$ is a finite tensor category. When $A$ is an algebra in $\Vect_k$, the categories of finite-dimensional left and right modules, denoted $\Mod{A}{}$ and $\Mod{}{A}$, are finite linear categories. More generally, when $\cC$ is a finite tensor category and $A$ is an algebra object in $\cC$ (also known as a monoid object), the categories $\Mod{A}{}(\cC)$ and $\Mod{}{A}(\cC)$ of left and right $A$-module objects in $\cC$ are also finite linear categories.
\end{example}

\begin{example}
When $K$ is a finite group, the category of finite-dimensional $K$-graded vector spaces $\Vect[K]$ is a finite tensor category.  Again when $K$ is finite, the category of finite-dimensional representations $\Rep(K)$ is a finite tensor category.  Similarly, if $H$ is a finite-dimensional Hopf algebra, then the category $\Rep(H)$ is a finite tensor category.
\end{example}


\begin{lemma} \cite[2.1.8]{MR1797619} \cite[Prop. 4.2.1]{egno-book}
\label{lma:RigidIsExact}
	Let $(\cC, \otimes)$ be a finite tensor category. The bilinear functor $\otimes: \cC \times \cC \to \cC$ is exact in both variables. 
\end{lemma}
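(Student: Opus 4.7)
The plan is to deduce exactness from the existence of adjoints, using rigidity to produce the adjoints explicitly. Note that since $\cC$ is abelian and $\otimes$ is additive in each variable (by bilinearity), it suffices to show that for each fixed $x \in \cC$ the functors $(-) \otimes x$ and $x \otimes (-)$ preserve all finite limits and all finite colimits; equivalently, that each admits both a left and a right adjoint. Finiteness of $\cC$ is not needed, only rigidity.

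First I would verify that the coevaluation $\eta: 1 \to x \otimes x^*$ and the evaluation $\varepsilon: x^* \otimes x \to 1$ for a right dual assemble into an adjunction $(-) \otimes x \dashv (-) \otimes x^*$, with unit and counit built in the usual way from $\eta$ and $\varepsilon$. Concretely, given $f: y \otimes x \to z$, the transpose is $(f \otimes \id_{x^*}) \circ (\id_y \otimes \eta)$, and the inverse transpose of $g: y \to z \otimes x^*$ is $(\id_z \otimes \varepsilon) \circ (g \otimes \id_x)$; the two zigzag identities of Definition~\ref{def:rigid} show these are mutually inverse and natural. Applying the same construction to a left dual $\ld{x}$ (that is, using that $x$ is the right dual of $\ld{x}$) yields an adjunction $(-) \otimes \ld{x} \dashv (-) \otimes x$.

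Since $\cC$ is rigid, every object $x$ possesses both a right dual $x^*$ and a left dual $\ld{x}$. The two adjunctions above therefore exhibit $(-) \otimes x$ simultaneously as a left adjoint (to $(-) \otimes x^*$) and as a right adjoint (to $(-) \otimes \ld{x}$). Hence $(-) \otimes x$ preserves all colimits and all limits, and in particular is exact. The analogous statement for $x \otimes (-)$ follows by running the identical argument on the monoidal opposite $\cC^\mp$, whose duals are supplied by the rigidity of $\cC$ in the second clause of Definition~\ref{def:rigid}; alternatively, one directly constructs adjunctions $x \otimes (-) \dashv \ld{x} \otimes (-)$ and $x^* \otimes (-) \dashv x \otimes (-)$ from the same unit and counit maps.

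There is no real obstacle here; the only point that requires care is bookkeeping of which dual produces which side of the adjunction. I would organize the write-up to derive the right-hand adjunctions from right duals and the left-hand adjunctions from left duals, so that each of $(-) \otimes x$ and $x \otimes (-)$ is visibly sandwiched between a left and a right adjoint, whence exactness is immediate.
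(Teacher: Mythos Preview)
Your proposal is correct and follows essentially the same argument as the paper: both use rigidity to exhibit $(-)\otimes x$ and $x\otimes(-)$ as simultaneously left and right adjoints (the paper records this via the natural isomorphisms $\Hom(x\otimes y,z)\cong\Hom(x,z\otimes y^*)\cong\Hom(y,{}^*x\otimes z)$), and conclude exactness from adjointness. Your write-up is simply a more explicit version of the same idea, correctly noting that finiteness plays no role.
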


\begin{proof}
	The units and counits give rise to natural isomorphisms
 \begin{equation*} 
 	\Hom(x \otimes y, z) \cong \Hom( x, z \otimes y^*) \cong \Hom(y, {}^*x \otimes z).
 \end{equation*}
	Hence for all $x$ and $y$ the functors $(-)\otimes x$ and $y \otimes (-)$ admit both left and right adjoints, and are consequently exact. 
\end{proof}


\section{Module categories} \label{sec:tc-bimod}

\subsection{Module categories, functors, and transformations} 

A module category is a linear category with an action by a linear monoidal category, and a bimodule category is a linear category with two commuting actions by linear monoidal categories:
\begin{definition}
Let $\cC$ and $\cD$ be linear monoidal categories.
A \emph{left $\cC$-module category} is a linear category $\cM$ together with a bilinear functor $\otimes^{\cM}: \cC \times \cM \to \cM$ and natural isomorphisms
\begin{align*}
		\alpha: & \;    \otimes^{\cM} \circ \; (\otimes^{\cC} \times id_{\cM}) \cong  \otimes^{\cM} \circ (id_{\cC} \times \otimes^{\cM}), \\
		\lambda: & \; \otimes^{\cM} (1_{\cC} \times -) \cong id_{\cM},
\end{align*}
satisfying the evident pentagon identity and triangle identities.  We will use the notation $c \otimes m := \otimes^\cM(c \times m)$.  A \emph{right $\cD$-module category} is defined similarly.  A \emph{$\cC$--$\cD$-bimodule category} is a linear category $\cM$ with the structure of a left $\cC$-module category and the structure of a right $\cD$-module category, together with a natural associator isomorphism $(c \otimes m) \otimes d \cong c \otimes (m \otimes d)$ satisfying two additional pentagon axioms and two additional triangle axioms.  
\end{definition}
\nid By a finite module or bimodule category we will mean simply a module or bimodule category whose underlying linear category is finite. 

\begin{example}
	Every linear category is a $\Vect_k$--$\Vect_k$-bimodule category in an essentially unique way: any two such structures are naturally isomorphic via a unique natural isomorphism.
\end{example}

\begin{example} \label{ex:ModulesAreModules}
	For any algebra object $A$ in a tensor category $\cC$, the category $\Mod{}{A}(\cC)$ of right $A$-modules in $\cC$ is a left $\cC$-module category.  Similarly, the category $\Mod{A}{}(\cC)$ of left $A$-modules in $\cC$ is a right $\cC$-module category.
\end{example}

\begin{example}
For a tensor category $\cC$, the category $\cC$ itself is naturally a $\cC$--$\cC$-bimodule category with the actions given by the tensor product and the natural transformations by the associator.
\end{example}

\begin{example}
Let $K$ be a finite group. 
$\Vect$ is a right $\Rep(K)$-module category with the action given by forgetting and then tensoring.  Similarly $\Vect$ is a left $\Vect[K]$-module category.  Note that $\Vect$ can be given the structure of a $\Rep(K)$--$\Vect[K]$-bimodule category in several ways.  For instance, one could choose the associator $V = (V \otimes k) \otimes k_g \rightarrow V \otimes (k \otimes k_g) = V$ to be the identity, or to be given by the action of $g$.
\end{example}

The notions of functor between module categories and transformation of such functors are as expected:
\begin{definition}
A linear functor $\cF: \cM \ra \cN$ from the $\cC$-module category $\cM$ to the $\cC$-module category $\cN$ is a:
\begin{enumerate}
	\item \emph{lax left $\cC$-module functor} if it is equipped with a natural transformation 
	\begin{equation*}
		\overline{f}_{c,m}:c \otimes \cF(m) \rightarrow \cF(c \otimes m)
	\end{equation*}
	 satisfying the evident pentagon relation and triangle relation.
	\item \emph{oplax left $\cC$-module functor} if it is equipped with a natural transformation 
	\begin{equation*}
		f_{c,m}:\cF(c \otimes m) \rightarrow c \otimes \cF(m)
	\end{equation*}
	satisfying the evident pentagon relation and triangle relation.
	\item \emph{strong left $\cC$-module functor} if it is equipped with a natural isomorphism 
	\begin{equation*}
		f_{c,m}:\cF(c \otimes m) \rightarrow c \otimes \cF(m)
	\end{equation*}
	satisfying the evident pentagon relation and triangle relation.
\end{enumerate} 
Lax, oplax, and strong right module functors and a bimodule functors are defined similarly. By convention all module functors will be assumed to be strong unless stated otherwise. 
\end{definition}

\begin{definition}
A \emph{left $\cC$-module transformation} from the $\cC$-module functor $\cF$ to the $\cC$-module functor $\cG$ is a natural transformation $\eta: \cF \ra \cG$ satisfying the condition $(\id_c \otimes \eta_m) \circ f_{c,m} = g_{c,m} \circ \eta_{c \otimes m}$.  Right module transformations and bimodule transformations are defined similarly.
\end{definition}

\nid Note that bimodule categories, functors, and transformations form a strict 2-category.  

\begin{example}
Any linear functor from $\Vect$ to $\Vect$ is determined by a choice of finite-dimensional vector space $F(k)$.  For $K$ a finite group, a $\Vect[K]$-module functor from $\Vect$ to $\Vect$ consists of a vector space $V$ together with for each $g$ a map $\rho(g): V \rightarrow V$ satisfying $\rho(gh) = \rho(g)\rho(h)$.  Giving a natural transformation between two such module functors $(V, \rho)$ is the same as giving a map of representations.  Thus $\Fun_{\Vect[K]}(\Vect, \Vect) \cong \Rep(K)$.
\end{example}

\begin{example}
	Let $\cC$ be a finite tensor category and $\cM$ and $\cN$ finite left $\cC$-module categories. The categories $\Fun_{\cC}(\cM, \cM)$ and $\Fun_{\cC}(\cN, \cN)$ of right exact $\cC$-module endofunctors are finite linear monoidal categories. 
	The linear category $\Fun_{\cC}(\cM, \cN)$ is a $\Fun_{\cC}(\cM, \cM)$--$\Fun_{\cC}(\cN, \cN)$-bimodule category (where the actions are given by $F \otimes G := G \circ F$). 
\end{example}

The following result is well known \cite[Rmk 4]{MR1976459}, but we were unable to find a proof in the literature.

\begin{lemma} \label{lem:laxisstrong}
	Let $\cC$ be a tensor category. Then every lax (or oplax) $\cC$-module functor is strong.  
\end{lemma}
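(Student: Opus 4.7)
The key input is rigidity: the duality data in $\cC$ provides exactly what is needed to invert the structure morphism of a lax (or oplax) module functor. I will treat the lax case in detail; the oplax case follows by an entirely analogous construction using left duals $\ld{c}$ in place of right duals $c^*$.

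Given a lax structure $\overline{f}_{c,m}: c \otimes \cF(m) \to \cF(c \otimes m)$, I would define a candidate inverse $g_{c,m}: \cF(c \otimes m) \to c \otimes \cF(m)$ using the right dual $c^*$, as the composite (with associators suppressed)
\[
\cF(c \otimes m) \xrightarrow{\coev_c \otimes \id} c \otimes c^* \otimes \cF(c \otimes m) \xrightarrow{\id_c \otimes \overline{f}_{c^*, c \otimes m}} c \otimes \cF(c^* \otimes c \otimes m) \xrightarrow{\id_c \otimes \cF(\ev_c \otimes \id_m)} c \otimes \cF(m).
\]

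Then I would verify that $g$ and $\overline{f}$ are mutually inverse. For the composite $g_{c,m} \circ \overline{f}_{c,m}$: (1) slide $\coev_c \otimes \id$ past $\overline{f}_{c,m}$ by naturality, since the coevaluation is independent of the input; (2) apply the pentagon axiom for the lax structure to rewrite $\overline{f}_{c^*, c \otimes m} \circ (\id_{c^*} \otimes \overline{f}_{c,m})$ as $\overline{f}_{c^* \otimes c, m}$ (modulo associators); (3) invoke naturality of $\overline{f}$ in the first argument applied to $\ev_c: c^* \otimes c \to 1$ to exchange $\cF(\ev_c \otimes \id_m) \circ \overline{f}_{c^* \otimes c, m}$ for $\overline{f}_{1, m} \circ (\ev_c \otimes \id_{\cF(m)})$; (4) identify $\overline{f}_{1, m}$ with the unit isomorphism via the triangle axiom. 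What remains, $(\id_c \otimes \ev_c) \circ (\coev_c \otimes \id_c)$ tensored with $\id_{\cF(m)}$, reduces to $\id_{c \otimes \cF(m)}$ by the zigzag identity. The reverse composite $\overline{f}_{c,m} \circ g_{c,m} = \id_{\cF(c \otimes m)}$ is established by a symmetric chase using the other zigzag.

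The main obstacle is purely bookkeeping: the proof reduces to a sequence of applications of the pentagon, triangle, naturality, and zigzag axioms, and the only substantive idea is that rigidity provides the duality data that makes the candidate inverse well-defined. In a clean writeup one can suppress most of the associator and unitor clutter by invoking Mac Lane coherence and treating $\cC$ and its actions on $\cM$ and $\cN$ as though they were strict.
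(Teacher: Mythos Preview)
Your proof is correct and is essentially identical to the paper's: the same candidate inverse (the mate of $\overline{f}_{c^*,c\otimes m}$ built from coevaluation, $\overline{f}$, and evaluation) is constructed, and the verification proceeds by the same ingredients---the pentagon axiom to merge two instances of $\overline{f}$, naturality in the $\cC$-variable applied to $\ev_c$, the triangle axiom for $\overline{f}_{1,m}$, and a zigzag identity. The paper presents the verification via a string-diagram calculation rather than the verbal chase you give, but the content is the same.
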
 

\begin{proof}
We show the lax case; the oplax case is similar.  Suppose that $F: \cM \ra \cN$ is a linear functor between $\cC$-module categories, and $f_{c,m}:  c \otimes F(m) \rightarrow F(c \otimes m)$ is a natural transformation making $F$ into a lax module functor.  
The inverse to this natural transformation is given explicitly by the mate of $f_{c^*,m}$, namely 
$$F(c \otimes m) \rightarrow c \otimes c^* \otimes F(c \otimes m) \rightarrow c \otimes F(c^* \otimes c \otimes m) \rightarrow c \otimes F(m)$$
Here the first map is given by the coevaluation, the second map is $f_{c^*,m}$, and the third map is evaluation.  Diagrammatically:
\begin{center}
\begin{tikzpicture}[yscale=0.6]
	\node (A) at (2,2) [minimum height=1cm,minimum width=2cm, draw] {$f_{c^*,m}$};
	\draw (0,0) -- (0,3) arc (180:0:0.75cm) |- (A.135);
	\draw (A.225) -- (1.5,0);
	\draw (A.45) -- (2.5,4);
	\draw (A.315) -| (2.5,1) arc (180:360:0.75cm) -- (4,4);
	\node at (2.5,4.5) {$F$};
	\node at (4,4.5) {$c \otimes (-)$};
	\node at (0,-0.5) {$c \otimes (-)$};
	\node at (1.5, -0.5) {$F$};
\end{tikzpicture}
\end{center} 
We need to check that this map is inverse to $f_{c^*,m}$.  That this map is inverse to $f_{c^*,m}$ follows from the associativity condition for module functors and naturality, as illustrated here:
\begin{center}
\begin{tikzpicture}[yscale=0.8, xscale=0.8, smooth]
	\node (A) at (2,2) [minimum height=1cm,minimum width=1.5cm, draw] {$f_{c^*,m}$};
	\node (B) at (1,0) [minimum height=1cm,minimum width=1.5cm, draw] {$f_{c,m}$};
	\draw (B.130) -- (.5,3) arc (180:0:0.5cm) |- (A.130) ;
	\draw (A.230) -- (B.53);
	\draw (A.53) -- (2.5,3.5);
	\draw (A.308) -- (2.5,1) arc (180:360:0.5cm) -- (3.5,3.5);
	\draw (B.308) -- (1.5, -1.5);
	\draw (B.230) -| (.5, -1.5);
	\node at (2.5,4) {$F$};
	\node at (4,4) {$c \otimes (-)$};
	\node at (.5, -2) {$F$};
	\node at (1.5, -2) {$c \otimes (-)$};
	
	\node at (5, 1) {$=$};
	
	\begin{scope}[xshift=5cm, yshift = -1cm]
		\node (A) at (2,2) [minimum height=1cm,minimum width=2cm, draw] {$f_{c \otimes c^*,m}$};
		\draw (A.153) -| (1,3) arc (180:0:0.5cm) -- (A.90) ;
		\draw (A.27) -| (3,3.5);
		\draw (A.333) -| (3,1) arc (180:360:0.5cm) -- (4, 3.5);
		\draw (A. 270) -- (2,.5);
		\draw (A.207) -| (1,.5);
		\node at (3,4) {$F$};
		\node at (4,4) {$c \otimes (-)$};
		\node at (1,0) {$F$};
		\node at (2,0) {$c \otimes (-)$};
	\end{scope}
	
	\node at (9.75, 1) {$=$};
	
	\begin{scope}[xshift=10.5cm, yshift=-1cm]
		\draw (0, 0) to [out=90, in=270] (2,3.5);
		\draw (1,0) -- (1,1) arc (180:0:.5cm) arc (180:360:.5cm) -- (3,3.5);
		\node at (2,4) {$F$};
		\node at (3,4) {$c \otimes (-)$};
		\node at (0,-.5) {$F$};
		\node at (1,-.5) {$c \otimes (-)$};
	\end{scope}
\end{tikzpicture} 
\end{center}
\end{proof}

\begin{lemma}\label{lem:module-adjoint-main}
	Let $\cC$ and $\cD$ be linear monoidal categories. Let  $\cM$ and  $\cN$  be  $\cC$--$\cD$-bimodule categories, and let $F: \cM \to \cN$ be an oplax (respectively lax) $\cC$--$\cD$-bimodule functor. If the underlying functor of $F$ has a right (respectively left) adjoint as a functor, then this adjoint admits the structure of a lax (respectively oplax) $\cC$--$\cD$-bimodule functor such that the following four squares commute
	\begin{center}
	\begin{tikzpicture}
			\node (LT) at (0, 1.5) {$F(c \otimes G(x))$};
			\node (LB) at (0, 0) {$FG(c \otimes x)$};
			\node (RT) at (3, 1.5) {$c \otimes FG(x)$};
			\node (RB) at (3, 0) {$c \otimes x$};
			\draw [->] (LT) -- node [left] {$\overline{g}_{c,x}$} (LB);
			\draw [->] (LT) -- node [above] {$f_{c, G(x)}$} (RT);
			\draw [->] (RT) -- node [right] {$\varepsilon$} (RB);
			\draw [->] (LB) -- node [below] {$\varepsilon$} (RB);
	\end{tikzpicture}
	\begin{tikzpicture}
			\node (LT) at (0, 1.5) {$F(G(x) \otimes d)$};
			\node (LB) at (0, 0) {$FG(x \otimes d)$};
			\node (RT) at (3, 1.5) {$FG(x) \otimes d$};
			\node (RB) at (3, 0) {$x \otimes d$};
			\draw [->] (LT) -- node [left] {$\overline{g}_{x,d}$} (LB);
			\draw [->] (LT) -- node [above] {$f_{G(x), d}$} (RT);
			\draw [->] (RT) -- node [right] {$\varepsilon$} (RB);
			\draw [->] (LB) -- node [below] {$\varepsilon$} (RB);
	\end{tikzpicture}

	\begin{tikzpicture}
			\node (LT) at (0, 1.5) {$c \otimes y$};
			\node (LB) at (0, 0) {$c \otimes GF(y)$};
			\node (RT) at (3, 1.5) {$GF(c \otimes y)$};
			\node (RB) at (3, 0) {$G( c \otimes F(y))$};
			\draw [->] (LT) -- node [left] {$\eta$} (LB);
			\draw [->] (LT) -- node [above] {$\eta$} (RT);
			\draw [->] (RT) -- node [right] {$f_{c,y}$} (RB);
			\draw [->] (LB) -- node [below] {$\overline{g}_{c, F(y)}$} (RB);
	\end{tikzpicture}
	\begin{tikzpicture}
			\node (LT) at (0, 1.5) {$y \otimes d$};
			\node (LB) at (0, 0) {$GF(y) \otimes d$};
			\node (RT) at (3, 1.5) {$GF(y \otimes d)$};
			\node (RB) at (3, 0) {$G( F(y) \otimes d)$};
			\draw [->] (LT) -- node [left] {$\eta$} (LB);
			\draw [->] (LT) -- node [above] {$\eta$} (RT);
			\draw [->] (RT) -- node [right] {$f_{y, d}$} (RB);
			\draw [->] (LB) -- node [below] {$\overline{g}_{ F(y),d}$} (RB);
	\end{tikzpicture}
	\end{center}
	(similar squares commute in the left adjoint case). 
	
	Moreover if $F$ is a strong $\cC$--$\cD$-bimodule functor and the unit and counit of the adjunction are isomorphisms, then the adjoint is also a strong $\cC$--$\cD$-bimodule functor, and the unit and counit are bimodule transformations.   	
\end{lemma}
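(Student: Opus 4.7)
The plan is to apply the standard mate correspondence (``doctrinal adjunction'' in Kelly's terminology). I treat the right-adjoint case; the left-adjoint case is formally dual. Given the adjunction $F \dashv G$, define the putative lax left $\cC$-action on $G$ by
\[
\overline{g}_{c,x} \;:=\; G(\id_c \otimes \varepsilon_x) \circ G(f_{c, G(x)}) \circ \eta_{c \otimes G(x)} \;:\; c \otimes G(x) \to G(c \otimes x),
\]
and define the right $\cD$-action on $G$ analogously. The compatibility of these two actions (i.e.\ the bimodule associator on $G$) will follow from the corresponding compatibility on $\cN$ together with the naturality of $\eta$ and $\varepsilon$.

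Next I would verify the pentagon and triangle coherence axioms for $\overline{g}$. Each amounts to a diagram chase in which one expands both sides via the definition of $\overline{g}$, then simplifies using the oplax coherence for $f$, the naturality of $\eta$ and $\varepsilon$, and the triangle identities $G\varepsilon \circ \eta G = \id_G$ and $\varepsilon F \circ F\eta = \id_F$. The four squares in the statement are precisely the content of the mate correspondence: for the first square, expanding $\varepsilon_{c \otimes x} \circ F(\overline{g}_{c,x})$, pulling $\varepsilon$ past $FG$ twice by naturality, and then collapsing $\varepsilon F \circ F\eta$ by a triangle identity leaves exactly $(\id_c \otimes \varepsilon_x) \circ f_{c, G(x)}$. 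The third square is verified symmetrically by pushing $\eta$ inward past $GF$ and collapsing $G\varepsilon \circ \eta G$. The second and fourth squares are entirely analogous, using the right $\cD$-action in place of the left $\cC$-action.

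For the final assertion, suppose $F$ is strong (so $f$ is invertible) and that $\eta$ and $\varepsilon$ are isomorphisms. Each of the three factors in the composite defining $\overline{g}_{c,x}$ is then invertible, so $\overline{g}$ is invertible and $G$ is strong. Under this invertibility, the first and second squares rearrange into the statement that $\varepsilon$ is a bimodule transformation $FG \Rightarrow \id$, and the third and fourth do the same for $\eta : \id \Rightarrow GF$.

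The main obstacle is purely notational: there are many coherence diagrams to chase, but all of them reduce mechanically to the triangle identities and the coherence axioms for $F$, and no new ideas are required beyond the standard mate calculus in 2-categories. The only point requiring a moment of care is ensuring that the same natural transformation $\overline{g}$ witnesses both the lax structure and the commutations with the unit and counit; this is automatic from taking $\overline{g}$ to be the mate by construction.
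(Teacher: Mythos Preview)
Your proposal is correct and follows essentially the same approach as the paper: both define the lax structure $\overline{g}_{c,x}$ on $G$ as the mate $G(\id_c\otimes\varepsilon_x)\circ G(f_{c,G(x)})\circ\eta_{c\otimes G(x)}$, treat the right $\cD$-action and bimodule compatibility analogously, and then observe that when $f$, $\eta$, $\varepsilon$ are all invertible the composite $\overline{g}$ is invertible and the four squares become exactly the bimodule-transformation conditions on $\eta$ and $\varepsilon$. Your write-up is in fact slightly more detailed than the paper's (you name the doctrinal-adjunction framework and sketch the triangle-identity cancellation for the first square), but there is no substantive difference in strategy.
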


\begin{proof}
Suppose that $G$ is the right adjoint to the underlying functor of $F$; we will show that $G$ naturally has the structure of a lax $\cC$--$\cD$-bimodule functor.  The result for left adjoints is similar.

The natural transformation $\overline{g}_{c,n}: c \otimes G(n) \rightarrow G(c \otimes n)$ is given by the mate:
$$c \otimes G(n) \rightarrow G F(c \otimes G(n)) \rightarrow G(c \otimes FG(n)) \rightarrow G(c \otimes n);$$
here the first map is the unit of the adjunction, the second map is the natural transformation given by the module functor structure on $F$, and the third map is the counit.  Diagrammatically: 
\begin{center}
\begin{tikzpicture}[yscale=0.6]
	\node (A) at (2,2) [minimum height=1cm,minimum width=2cm, draw] {$f_{c,G(-)}$};
	\draw (0,0) -- (0,3) arc (180:0:0.75cm) |- (A.135);
	\draw (A.225) -- (1.5,0);
	\draw (A.45) -- (2.5,4);
	\draw (A.315) -- (2.5,1) arc (180:360:0.75cm) -- (4,4);
	\node at (2.5,4.5) {$c \otimes (-)$};
	\node at (4,4.5) {$G$};
	\node at (0,-0.5) {$G$};
	\node at (1.5, -0.5) {$c \otimes (-)$};
\end{tikzpicture}
\end{center}

\noindent 
Providing the structure of a $\cD$-module functor is similar, and these left and right module functor structures are compatible---that is they form a bimodule functor structure. The four commuting squares are easily verified. 

If the unit, counit, and transformations $f$ are invertible, then the above transformation will also be invertible, and hence $G$ will be strong, and the four commuting squares are equivalent to the statement that the unit and counit are bimodule transformations.  
\end{proof}

This yields the following characterization of equivalences of bimodule categories.

\begin{corollary}\label{cor:Recog_equiv_of_bimod}
	Given two $\cC$--$\cD$-bimodule categories $\cM$ and $\cN$, and a bimodule functor $F:\cM \to \cN$, then $F$ is an equivalence of $\cC$--$\cD$-bimodule categories if and only if it induces an equivalence of underlying categories.
\end{corollary}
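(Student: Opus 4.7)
The forward direction is immediate: if $F$ is an equivalence of $\cC$--$\cD$-bimodule categories, then a bimodule quasi-inverse $G$ is in particular a quasi-inverse of underlying linear functors, so $F$ induces an equivalence of underlying categories.

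For the converse, suppose the underlying functor of $F: \cM \to \cN$ is an equivalence of linear categories. A standard fact about equivalences of categories says that any equivalence can be promoted to an adjoint equivalence; in particular, one may choose a quasi-inverse $G: \cN \to \cM$ together with unit $\eta: \id_\cM \xrightarrow{\sim} GF$ and counit $\varepsilon: FG \xrightarrow{\sim} \id_\cN$ that are natural isomorphisms satisfying the triangle identities, so that $(F, G, \eta, \varepsilon)$ is an adjunction with $F \dashv G$ whose unit and counit are invertible.

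Now I plan to invoke Lemma~\ref{lem:module-adjoint-main} directly. Since $F$ is a strong $\cC$--$\cD$-bimodule functor (hence, by neglecting inverses, oplax), and since $F$ has a right adjoint $G$ as a plain linear functor, the lemma endows $G$ with the structure of a lax $\cC$--$\cD$-bimodule functor making the four adjoint-compatibility squares commute. Moreover, the final clause of Lemma~\ref{lem:module-adjoint-main} applies because both the bimodule constraint isomorphisms for $F$ are invertible and the unit $\eta$ and counit $\varepsilon$ are isomorphisms; it concludes that $G$ is in fact a strong bimodule functor and that $\eta$ and $\varepsilon$ are bimodule transformations.

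Thus $G$ is a bimodule functor and $\eta, \varepsilon$ are invertible bimodule transformations exhibiting $F$ and $G$ as mutually inverse equivalences in the $2$-category of $\cC$--$\cD$-bimodule categories, bimodule functors, and bimodule transformations. I expect no real obstacle here: the entire content is already packaged in Lemma~\ref{lem:module-adjoint-main}, and the only minor point to mention is the passage from an arbitrary quasi-inverse to one fitting into an adjoint equivalence, which is standard.
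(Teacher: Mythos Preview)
Your proof is correct and follows exactly the approach intended by the paper: the corollary is stated immediately after Lemma~\ref{lem:module-adjoint-main} with no separate proof, precisely because promoting the equivalence to an adjoint equivalence and invoking the final clause of that lemma is all that is needed. The only step you make explicit that the paper leaves implicit is the standard upgrade of an equivalence to an adjoint equivalence, which is indeed routine.
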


%

The following corollary was left as an exercise to the reader in \cite[\S 3.3]{EO-ftc}.
\begin{corollary} \label{cor:module-adjoint}
Let $\cC$ and $\cD$ be tensor categories. Let  $\cM$ and  $\cN$  be  $\cC$--$\cD$-bimodule categories, and let $F: \cM \to \cN$ be a $\cC$--$\cD$-bimodule functor.  If the underlying functor of $F$ has a right (respectively left) adjoint as a functor, then $F$ has a right (respectively left) adjoint $\cC$--$\cD$-bimodule functor such that the unit and counit maps are bimodule natural transformations.
\end{corollary}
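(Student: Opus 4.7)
The plan is to combine Lemma~\ref{lem:module-adjoint-main} with Lemma~\ref{lem:laxisstrong}. Since a strong bimodule functor is in particular an oplax bimodule functor, I would first apply Lemma~\ref{lem:module-adjoint-main} to $F$ to endow its right adjoint $G$ with the structure of a lax $\cC$--$\cD$-bimodule functor: the lax left $\cC$-module structure map $\overline{g}_{c,x}: c \otimes G(x) \to G(c \otimes x)$ is defined as the mate of the left $\cC$-module structure map of $F$, and analogously for the right $\cD$-module structure. Lemma~\ref{lem:module-adjoint-main} furnishes, as a bonus, the four commuting squares relating the unit $\eta$ and counit $\varepsilon$ of the adjunction to the structure maps $f$ of $F$ and $\overline{g}$ of $G$.

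Next I would invoke Lemma~\ref{lem:laxisstrong}, which exploits rigidity of $\cC$ and $\cD$ (here is where the tensor-category hypothesis is essential) to upgrade any lax one-sided module structure to a strong one. Applied separately to the left $\cC$-module and right $\cD$-module structures on $G$, this shows that each of $\overline{g}_{c,x}$ and $\overline{g}_{x,d}$ is in fact an isomorphism, with explicit inverse given by the appropriate mate through the duals. A small verification is required to ensure that the resulting strong left and right module structures are compatible with the bimodule associator on $G$; this follows because the associator is unchanged and the inverses produced by Lemma~\ref{lem:laxisstrong} are canonical mates constructed purely from the already-compatible lax structure together with evaluations and coevaluations, so the relevant compatibility diagram reduces to the bimodule compatibility on $F$ by naturality and the zigzag identities.

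With $G$ now promoted to a strong $\cC$--$\cD$-bimodule functor, the four commuting squares produced earlier become exactly the defining conditions that the unit $\eta: \id_\cM \to GF$ and the counit $\varepsilon: FG \to \id_\cN$ are $\cC$--$\cD$-bimodule natural transformations. The left adjoint case is handled by the dual argument: view $F$ as a (strong, hence) lax bimodule functor, apply the left-adjoint half of Lemma~\ref{lem:module-adjoint-main} to equip the left adjoint with an oplax bimodule structure together with the analogous four squares, and then use the oplax version of Lemma~\ref{lem:laxisstrong} (whose proof is symmetric) to conclude strongness.

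The main obstacle I anticipate is the bimodule compatibility check in the second step; however, once the separate one-sided upgrades are in place, this is a routine diagram chase since the hard work of producing the inverses and the four squares has already been done in the two cited lemmas.
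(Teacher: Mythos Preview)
Your proposal is correct and follows essentially the same approach as the paper: apply Lemma~\ref{lem:module-adjoint-main} to obtain a lax bimodule structure on the adjoint together with the four commuting squares, invoke Lemma~\ref{lem:laxisstrong} (using rigidity) to upgrade lax to strong, and then observe that the four squares are precisely the bimodule-transformation conditions for the unit and counit. The paper's proof is terser and does not spell out the bimodule compatibility check you flag, but the argument is the same.
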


\begin{proof}
	By Lemma~\ref{lem:module-adjoint-main} the right adjoint $G$ has the structure of a lax $\cC$--$\cD$-bimodule functor. By Lemma~\ref{lem:laxisstrong} it is actually strong. In this case the four commuting squares of Lemma~\ref{lem:module-adjoint-main} imply that the unit and counit are bimodule natural transformations. The case of left adjoints is similar. 
\end{proof}


\begin{lemma}\label{lem:partially_exact_action} \cite[Ex 7.3.2]{egno-book} \cite[Prop. 2.1.8]{MR1797619}
	Let $\cC$ be a tensor category and let $\cM$ be a $\cC$-module category. Then for each object $c \in \cC$, the action map $c \otimes (-): \cM \to \cM$ is exact. 
\end{lemma}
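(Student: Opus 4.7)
The plan is to mimic the proof of Lemma~\ref{lma:RigidIsExact} for the monoidal case: I will show that the endofunctor $c \otimes (-): \cM \to \cM$ admits both a left and a right adjoint, so that it preserves all limits and colimits, and is therefore exact. The adjoints will be given by tensoring with the right and left duals of $c$ in $\cC$, which exist because $\cC$ is rigid.

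Fix an object $c \in \cC$, let $c^*$ be its right dual with coevaluation $\eta: 1 \to c \otimes c^*$ and evaluation $\varepsilon: c^* \otimes c \to 1$, and let ${}^*c$ be its left dual with the analogous structure maps. I would first construct an adjunction $(c^* \otimes -) \dashv (c \otimes -)$ by exhibiting explicit unit and counit natural transformations using the module associator $\alpha$ and left unitor $\lambda$. The unit $m \to c \otimes (c^* \otimes m)$ is the composite
\[
m \xrightarrow{\;\lambda^{-1}\;} 1 \otimes m \xrightarrow{\;\eta \otimes \id_m\;} (c \otimes c^*) \otimes m \xrightarrow{\;\alpha\;} c \otimes (c^* \otimes m),
\]
and the counit $c^* \otimes (c \otimes m) \to m$ is the composite
\[
c^* \otimes (c \otimes m) \xrightarrow{\;\alpha^{-1}\;} (c^* \otimes c) \otimes m \xrightarrow{\;\varepsilon \otimes \id_m\;} 1 \otimes m \xrightarrow{\;\lambda\;} m.
\]
The triangle identities for this adjunction follow by a diagram chase that combines the zig-zag identities for the duality $(c,c^*)$ in $\cC$ with the pentagon and triangle axioms of the module category $\cM$. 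An entirely parallel argument, using ${}^*c$, produces a right adjoint $(c \otimes -) \dashv ({}^*c \otimes -)$.

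With both adjoints in hand, $c \otimes (-)$ preserves finite limits (as a right adjoint) and finite colimits (as a left adjoint), and in particular preserves kernels and cokernels; thus it is exact.

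The main obstacle is purely bookkeeping: verifying the two triangle identities for each adjunction requires unpacking the module pentagon and triangle axioms carefully so that the ``$\alpha, \lambda$-scaffolding'' on the $\cM$-side cancels and one is left with the bare zig-zag for $(c, c^*)$ (or $({}^*c, c)$) in $\cC$. Once this coherence check is done, the exactness is immediate.
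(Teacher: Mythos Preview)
Your proposal is correct and is exactly the approach taken in the paper: the paper's proof simply notes that $c \otimes (-)$ has left adjoint $c^* \otimes (-)$ and right adjoint ${}^*c \otimes (-)$, and concludes exactness. You have supplied the explicit unit and counit and the coherence verification that the paper leaves implicit, but the argument is the same.
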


\begin{proof}
	For each $c \in \cC$, the functor $c \otimes (-)$ admits both left and right adjoints, namely $c^* \otimes (-)$ and ${}^*c \otimes (-)$ respectively. 
\end{proof}

\subsection{Module categories are categories of modules}

Just as any finite linear category is a category of modules over an algebra, any finite module category over a finite tensor category is a category of module objects over an algebra object.  This result is one of the main theorems of \cite{egno-book}, and is essential to the structure theory of finite tensor categories.  The key construction underlying the proof is Ostrik's notion of the enriched hom for module categories \cite{MR1976459}.  

The following proposition is an elaboration of results of \cite{MR1976459} and \cite{EO-ftc}. 
\begin{proposition} \label{thm:enrichment-of-mod-cats}
	Let $\cC$ be a finite linear monoidal category and let $\cM$ be a finite $\cC$-module category. Assume that the action map $\cC \times \cM \to \cM$ is right exact in each variable. 
		Then $\cM$ is enriched, tensored, and cotensored over $\cC$, with the tensor structure given by its structure as a $\cC$-module category. 
\end{proposition}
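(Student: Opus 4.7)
The plan is to construct the enriched hom and the cotensor separately, using two tools developed in Section~\ref{sec:tc-lincat}: the representability criterion (Corollary~\ref{cor:representable}) will produce $\IHom_\cM(m,n) \in \cC$, and the adjoint functor theorem (Proposition~\ref{prop:AFT}) will produce the cotensor $[c,n] \in \cM$. The right-exactness of the action in each variable, assumed in the statement, is precisely the input both tools require. The tensor structure requires no construction: it is the module action itself.

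First we construct the internal hom. Fix $m, n \in \cM$ and consider the functor
\[ G_{m,n} \colon \cC^\op \to \Vect_k, \qquad c \mapsto \Hom_\cM(c \otimes m, n). \]
By hypothesis, $(-) \otimes m \colon \cC \to \cM$ is right exact, so it preserves finite colimits; the functor $\Hom_\cM(-, n)$ turns colimits in $\cM$ into limits in $\Vect_k$; hence the composite $G_{m,n}$ sends finite colimits in $\cC$ to finite limits in $\Vect_k$, i.e.\ it is left exact in the sense of Corollary~\ref{cor:representable}. That corollary then provides an object $\IHom_\cM(m,n) \in \cC$ together with a natural isomorphism
\[ \Hom_\cM(c \otimes m, n) \cong \Hom_\cC\bigl(c, \IHom_\cM(m,n)\bigr), \]
and the Yoneda lemma makes $\IHom_\cM(-,-)$ functorial in both variables. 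This isomorphism is precisely the tensor-hom compatibility needed for $\cM$ to be tensored over $\cC$ via the module action.

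Next we construct the cotensor. For each $c \in \cC$, the functor $c \otimes (-) \colon \cM \to \cM$ is right exact by hypothesis, so by the right-exact version of Proposition~\ref{prop:AFT} it admits a right adjoint, which we denote $[c, -]$. Uniqueness of adjoints promotes $[-,-]$ to a bifunctor $\cC^\op \times \cM \to \cM$, and the adjunction supplies the cotensor compatibility
\[ \Hom_\cM(m, [c, n]) \cong \Hom_\cM(c \otimes m, n) \cong \Hom_\cC\bigl(c, \IHom_\cM(m,n)\bigr). \]

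Finally, the enriched composition $\IHom_\cM(n,p) \otimes \IHom_\cM(m,n) \to \IHom_\cM(m,p)$ and the units $1 \to \IHom_\cM(m,m)$ are obtained by a standard Yoneda argument: the evaluation map $\IHom_\cM(m,n) \otimes m \to n$ corresponds under the representability isomorphism to the identity on $\IHom_\cM(m,n)$; composing two such evaluations and re-adjointing yields the composition; and the associativity and unit axioms reduce to identities among maps into $\IHom_\cM$ that can be verified at the level of $\Hom$-sets. The bulk of the remaining effort, and the main place where errors could enter, is the bookkeeping for these coherence axioms together with the check of simultaneous naturality of the representability isomorphism in $c$, $m$, and $n$; neither is a serious obstacle, but both require care. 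The conceptual point driving the proof is that right-exactness of the action in each variable is exactly what unlocks Corollary~\ref{cor:representable} on the $\cC$-side and Proposition~\ref{prop:AFT} on the $\cM$-side.
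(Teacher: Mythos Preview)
Your proof is correct and follows essentially the same approach as the paper: both construct $\IHom_\cM(m,n)$ by applying the representability criterion (Corollary~\ref{cor:representable}) to the left exact functor $c \mapsto \Hom_\cM(c\otimes m,n)$ on $\cC^\op$. The only cosmetic difference is in the cotensor: the paper again invokes Corollary~\ref{cor:representable}, this time on $\cM^\op$, to represent $m' \mapsto \Hom_\cM(c\otimes m',m)$ by an object $m^c$, whereas you apply Proposition~\ref{prop:AFT} directly to the right exact functor $c\otimes(-)\colon\cM\to\cM$ to obtain its right adjoint $[c,-]$. Since Corollary~\ref{cor:representable} is itself deduced from Proposition~\ref{prop:AFT}, the two routes are the same argument in slightly different packaging; your version has the minor advantage of producing the whole functor $[c,-]$ at once rather than object by object.
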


\begin{proof}
	We need functorial assignments of objects $\IHom(m', m) \in \cC$ and $m^c \in \cM$ for every $m,m' \in \cM$ and $c \in \cC$, and natural isomorphisms
	\begin{equation*}
		\Hom_\cC(c, \IHom(m', m)) \cong \Hom_\cM(c \otimes m', m) \cong \Hom_\cM( m', m^c)
	\end{equation*}
	witnessing adjunctions
	\begin{equation*}
			c \otimes - \quad \dashv\quad -^c \quad \textrm{ and } \quad - \otimes m' \quad \dashv \quad \IHom(m', -).
	\end{equation*}
Such assignments exist precisely if the functors
\begin{align*}
	\Hom_\cM(-\otimes m', m): \; & \cC^\op \to \Vect \\
	\Hom_\cM(c \otimes -, m): \; & \cM^\op \to \Vect
\end{align*}
are representable. The representing objects will be $\IHom(m', m)$ and $m^c$, respectively. Both of these functors are left exact, and hence by Corollary \ref{cor:representable} both of these functors are indeed representable.  
\end{proof}

\begin{remark} \label{rem-enrich}
	The existence of the enrichment $\IHom(m', m)$ only requires $\cC$ to be finite and the action to be right exact in $\cC$, and similarly the existence of the cotensor $m^c$ only requires $\cM$ to be finite and the action to be right exact in $\cM$.
\end{remark}

\begin{example}
Consider $\Vect$ as a module category over $\Vect[K]$.  Then $\IHom(1,1)$ is $k[K]$ with each $g \in K$ having grading $g$.
\end{example}

The goal of this subsection is to prove a generalization of the following result.

\begin{theorem}{\cite[Cor. 7.10.5]{egno-book}, \cite[Thm 1]{MR1976459}} \label{thm:EGNO2.11.6} 
	Let $\cM$ be a left module category over a finite tensor category $\cC$, and assume the action is right exact in $\cC$. If $\cM$ is finite as a linear category, then there exists an algebra object $A \in \cC$ together with an equivalence $\cM \simeq \Mod{}{A} (\cC)$ as left $\cC$-module categories. 
\end{theorem}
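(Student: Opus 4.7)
The plan is to build the equivalence explicitly via Ostrik's internal-hom construction. Since $\cM$ is finite as a linear category, I would choose a projective generator $m \in \cM$---say the direct sum of projective covers of representatives for the (finitely many) isomorphism classes of simple objects. Proposition~\ref{thm:enrichment-of-mod-cats}, applied using the right-exactness of the action in $\cC$, furnishes a $\cC$-enriched hom on $\cM$. Set $A := \IHom(m,m)$; the composition operation (i.e.\ the mate under $-\otimes m \dashv \IHom(m,-)$ of the evaluation $A \otimes A \otimes m \to A \otimes m \to m$) equips $A$ with a multiplication, while the mate of $\id_m$ provides a unit, making $A$ an algebra object in $\cC$. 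The candidate equivalence is $F: \cM \to \Mod{}{A}(\cC)$ defined by $F(x) := \IHom(m,x)$, with the right $A$-action given by post-composition.

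The next step is to promote $F$ to a left $\cC$-module functor by constructing natural isomorphisms $\IHom(m, c \otimes x) \cong c \otimes \IHom(m,x)$. Invoking the Yoneda lemma and the rigidity of $\cC$, one obtains
\begin{align*}
  \Hom_\cC(d, \IHom(m, c \otimes x))
  &\cong \Hom_\cM(d \otimes m, c \otimes x) \\
  &\cong \Hom_\cM(c^* \otimes d \otimes m, x) \\
  &\cong \Hom_\cC(c^* \otimes d, \IHom(m,x)) \\
  &\cong \Hom_\cC(d, c \otimes \IHom(m,x)),
\end{align*}
where the second and fourth isomorphisms use the adjunctions $c^* \otimes (-) \dashv c \otimes (-)$ in $\cM$ and $\cC$ respectively.

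To conclude that $F$ is an equivalence I would establish the following in sequence: (i) $F$ is exact---left exactness is automatic since $F$ is a right adjoint, and right exactness reduces to the projectivity of each $d \otimes m$, which follows from the rewriting $\Hom_\cM(d \otimes m, -) \cong \Hom_\cM(m, {}^*d \otimes -)$ together with the projectivity of $m$ and Lemma~\ref{lem:partially_exact_action}; (ii) $F$ is faithful because $m$ generates $\cM$ (apply Lemma~\ref{lem:fiaghtfulrightadjoint}); (iii) $F$ admits a left adjoint $G$ by the right-exact variant of Proposition~\ref{prop:AFT}, which one identifies explicitly as $G(N) = N \otimes_A m := \coeq(N \otimes A \otimes m \rightrightarrows N \otimes m)$; (iv) $G$ inherits the structure of a $\cC$-module functor by Corollary~\ref{cor:module-adjoint}; and (v) the unit and counit of $G \dashv F$ are isomorphisms. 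For (v), I would first verify the isomorphism on a class of ``free'' generators---the counit $GF(m) = A \otimes_A m \to m$ reduces to the identity, and the unit $V \otimes A \to FG(V \otimes A) = \IHom(m, V \otimes m)$ reduces via the module structure and the isomorphism of paragraph two to the identity---and then extend to arbitrary objects by presenting them as cokernels of maps between such generators and invoking the five lemma.

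The main obstacle is the extension step in (v). For the five-lemma argument to go through cleanly, one must ensure that the unit and counit are module natural transformations (so the presentation diagrams commute compatibly with the $\cC$-actions) and that both $F$ and $G$ are right exact on the relevant objects. Corollary~\ref{cor:module-adjoint} handles the first concern and the exactness facts accumulated in (i)--(iv) handle the second, but carefully assembling these ingredients---especially tracking the interaction of the presentation with the module structure---is where the bulk of the technical work lies.
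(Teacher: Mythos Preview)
Your proposal is correct and follows essentially the same route as the paper: choose an ordinary projective generator $m$, set $A=\IHom(m,m)$, use rigidity both to make $\IHom(m,-)$ a $\cC$-module functor and to show it is exact, and verify that the adjunction $(-)\otimes_A m \dashv \IHom(m,-)$ is an equivalence. The paper packages the argument slightly differently---isolating the non-rigid portion as Theorem~\ref{thm:C-module-Embedding} and the rigidity-dependent inputs as Lemma~\ref{lma:Enough_C-projs} and Corollary~\ref{cor:module-adjoint}---and in step~(v) it bypasses your five-lemma bookkeeping by observing directly that the exact faithful functor $\IHom(m,-)$ reflects isomorphisms (Lemma~\ref{lem:fiaghtfulrightadjoint}), but the substance is the same.
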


\begin{example} \label{eg:vect}
The $\Vect[K]$-module category $\Vect$ is equivalent to the category of modules over the algebra object $k[K] \in \Vect[K]$; here as before the element $g \in K$ has grading $g$.  
\end{example}

\begin{example} \label{ex:lax-module}
	In this theorem, it is necessary to assume $\cC$ is rigid.  Let $\cR \cong \Vect \oplus (\Vect \cdot X)$ be the non-rigid linear monoidal category consisting of pairs of vector spaces, which we write as $V_1 + V_2 X$, with tensor product given by 
	\begin{equation*}
		(V_1 + V_2 X) \otimes (W_1 + W_2 X) = V_1 \otimes W_1  +  (V_1 \otimes W_2 \oplus V_2 \otimes W_1)X.
	\end{equation*} 
	Up to equivalence there is a unique choice of associators and unitors making this a linear monoidal category. 
This is a categorification of the ring $k[x]/(x^2)$.  It is both finite and semisimple, but it is not rigid: the object $X$ cannot have a dual as there is no object $Z$ such that $Z \otimes X$ has a non-zero map to or from the unit object. 
	
	There is a tensor functor $F:\cR \to \Vect$ given by $(V_1 + V_2 X) \mapsto V_1$. This gives the category $\Vect$ the structure of a (left) $\cR$-module category, and moreover $F$ is naturally an $\cR$-module map. $F$ has both a left and a right adjoint, which agree and are given by the functor $G: \Vect \to \cR$ sending $W \in \Vect$ to $(W + 0 X) \in \cR$. 
	It is not possible to give $G$ the structure of a (strong) $\cR$-module functor (in contrast to Cor.~\ref{cor:module-adjoint}). Moreover there is no algebra object $A \in \cR$ such that $\Vect$ is equivalent to $\Mod{}{A}(\cR)$ as linear categories, let alone as $\cR$-module categories. 
\end{example}

In order to see how rigidity appears in the proof of the above theorem, we will first show a more general result that does not assume rigidity.

\begin{definition}
	Let $\cC$ be a finite linear monoidal category and let $\cM$ be a finite $\cC$-module category. Assume that the action is right exact in $\cC$, so $\cM$ is enriched over $\cC$ by Remark~\ref{rem-enrich}. 
	An object $p \in \cM$ will be called {\em $\cC$-projective} if $\IHom(p, -)$ is right exact (it is automatically left exact). An object $p$ will be called a {\em $\cC$-generator} if $\IHom(p,-)$ is faithful.
\end{definition}

\begin{lemma}\label{lem:gen}
	Let $\cC$ be a finite linear monoidal category and let $\cM$ be a finite $\cC$-module category in which the action is right exact in $\cC$. Then for an object $p \in \cM$ the following are equivalent:
	\begin{enumerate}
		\item for each object $x \in \cM$ the canonical map $\IHom(p,x) \otimes p \twoheadrightarrow x$ is a surjection;
		\item for each object $x \in \cM$ there exists an object $c \in \cC$ and a surjection $c \otimes p \twoheadrightarrow x$;
		\item $p \in \cM$ is a $\cC$-generator;
	\end{enumerate}
	In particular an ordinary generator (i.e. an object such that $\Hom_\cM(p,-)$ is faithful) is also a $\cC$-generator.
\end{lemma}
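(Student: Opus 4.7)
The key tool is the adjunction $(-) \otimes p \dashv \IHom(p,-)$ between $\cC$ and $\cM$ whose counit is precisely the canonical map $\IHom(p,x) \otimes p \to x$ appearing in condition~(1). The plan is to derive all three equivalences from this adjunction, invoking Lemma~\ref{lem:fiaghtfulrightadjoint}.

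First I would dispatch $(1) \Leftrightarrow (3)$: since $\IHom(p,-)$ is by construction the right adjoint of $(-) \otimes p$, Lemma~\ref{lem:fiaghtfulrightadjoint} says directly that $\IHom(p,-)$ is faithful if and only if the counit is surjective on every object $x \in \cM$. Thus (3) is literally the statement that the counit maps in (1) are all surjective. Next, $(1) \Rightarrow (2)$ is trivial by taking $c := \IHom(p,x)$ together with the counit itself. For $(2) \Rightarrow (1)$, given a surjection $\varphi : c \otimes p \twoheadrightarrow x$, the adjunction produces a map $\tilde\varphi : c \to \IHom(p,x)$ through which $\varphi$ factors as the composite
\begin{equation*}
c \otimes p \xrightarrow{\tilde\varphi \otimes \id_p} \IHom(p,x) \otimes p \xrightarrow{\ev} x;
\end{equation*}
since $\varphi$ is epi, the second factor $\ev$ is epi as well, which is condition~(1).

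For the final assertion, if $p$ is an ordinary generator, i.e.\ $\Hom_\cM(p,-)$ is faithful, I would apply Lemma~\ref{lem:fiaghtfulrightadjoint} once more, but now to the $\Vect$-linear adjunction $(-)\otimes_k p : \Vect \leftrightarrows \cM : \Hom_\cM(p,-)$ (where the left adjoint uses the canonical $\Vect$-enrichment of any linear category, equivalently the action of $1_{\cC} \in \cC$). Faithfulness of $\Hom_\cM(p,-)$ yields a surjection $V \otimes_k p \twoheadrightarrow x$ for every $x$, where $V = \Hom_\cM(p,x)$ is finite-dimensional by finiteness of $\cM$. Rewriting $V \otimes_k p \cong (1_{\cC}^{\oplus \dim V}) \otimes p$ via the unit and bilinearity of the $\cC$-action, this exhibits a surjection of the form $c \otimes p \twoheadrightarrow x$ with $c \in \cC$, which is condition~(2), hence condition~(3) by the already established equivalences.

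I do not anticipate a serious obstacle here: the whole argument is formal manipulation of the counit of the adjunction together with the previously established Lemma~\ref{lem:fiaghtfulrightadjoint}. The only mild subtlety is keeping track of where finiteness enters (it is needed to identify $V \otimes_k p$ with a finite direct sum so that $V$ can be replaced by an object of $\cC$) and noting that right exactness of the action in the $\cC$-variable is what guarantees $\IHom$ exists in the first place (by Remark~\ref{rem-enrich}), so that the adjunction being invoked is actually available.
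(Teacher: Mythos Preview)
Your proof is correct and essentially follows the paper's approach: both rely on the adjunction $(-)\otimes p \dashv \IHom(p,-)$ and Lemma~\ref{lem:fiaghtfulrightadjoint} as the main engine. The only cosmetic difference is that the paper runs the cycle $(1)\Rightarrow(2)\Rightarrow(3)\Rightarrow(1)$, proving $(2)\Rightarrow(3)$ by a direct faithfulness check, whereas you close the loop via $(2)\Rightarrow(1)$ using the counit factorization; both steps are one-liners of comparable strength. Your treatment of the ``in particular'' clause is more explicit than the paper's (which leaves it to the reader), and your observation that finiteness of $\cM$ is what lets you replace $V\otimes_k p$ by $(1_\cC^{\oplus\dim V})\otimes p$ is exactly the point.
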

\begin{proof}
	Clearly $(1) \Rightarrow (2)$. Suppose that $(2)$ holds and let $f: x \to y$ be a map such that $\IHom(p, x) \to \IHom(p,y)$ is zero. It follows that for all $c$ and all $c \to \IHom(p,x)$, the induced composite $c \otimes p \to x \to y$ is zero. By choosing $c$ such that $c \otimes p \to x$ is surjective, it follows that $f=0$. Hence $(2) \Rightarrow (3)$. Finally $(3) \Rightarrow (1)$ by Lemma~\ref{lem:fiaghtfulrightadjoint}.
\end{proof}


\begin{example} \label{ex:rigid_all_C-proj}
	Let $\cC$ be a finite tensor category. We may view $\cC$ as a left $\cC$-module category over itself. Since $\cC$ is rigid, we have isomorphisms $\IHom(x,y) \cong y \otimes x^*$ and $(y^x) \cong {}^*x \otimes y$ (cf the proof of Lemma \ref{lma:RigidIsExact}). Moreover by Lemma \ref{lma:RigidIsExact}, in this case the tensor product is exact in each variable, hence every object of $\cC$ is $\cC$-projective, even those objects that are not projective in the usual sense. 
\end{example}

\begin{theorem} \label{thm:C-module-Embedding} 

	Let $\cM$ be a left module category over a finite (not necessarily rigid) linear monoidal category $\cC$. Assume that the action is right exact in $\cC$. Fix an object $p \in \cM$, and set $A = \IHom(p,p) \in \cC$. The object $A$ is naturally an algebra object in $\cC$, and there is a $\cC$-module functor:
	\begin{align*}
		F:   \Mod{}{A}(\cC) \to \cM \\
		F(x_A) := \coeq \left( x \otimes \IHom(p,p) \otimes p \rightrightarrows x \otimes p \right).
	\end{align*}
The functor $\IHom(p,-): \cM \to \Mod{}{A}(\cC)$ is right adjoint to $F$. 

Moreover, assume that $\IHom(p,-)$ may be equipped with the structure of a $\cC$-module functor and that the unit and counit of the adjunction $F \dashv \IHom(p,-)$ are morphisms of $\cC$-module functors. Then the $\cC$-module adjunction  $F \dashv \IHom(p,-)$ induces an equivalence of left $\cC$-module categories $\cM \simeq \Mod{}{A}(\cC)$ if and only if  $p$ is a $\cC$-projective $\cC$-generator.
\end{theorem}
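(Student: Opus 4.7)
The plan has three steps: abstractly construct the algebra $A$ together with the adjunction $F \dashv \IHom(p,-)$; dispatch the ``only if'' direction of the equivalence characterization by basic adjoint-functor nonsense; and prove the ``if'' direction by verifying the unit and counit are isomorphisms on a generating family and then extending by a right-exact five-lemma argument.

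First I would equip $A = \IHom(p,p)$ with an algebra structure by taking the multiplication $A \otimes A \to A$ to be the mate under $-\otimes p \dashv \IHom(p,-)$ of the double evaluation $A \otimes A \otimes p \to A \otimes p \to p$ and the unit $1 \to A$ to be the mate of the unitor $1 \otimes p \cong p$; associativity and unitality are standard enriched-composition diagram chases. Analogously, for any $y \in \cM$ the object $\IHom(p,y)$ acquires a right $A$-action as the mate of $\IHom(p,y) \otimes A \otimes p \to \IHom(p,y) \otimes p \to y$, so $\IHom(p,-)$ lifts to a functor $\cM \to \Mod{}{A}(\cC)$. The coequalizer defining $F(x_A)$ exists because $\cM$ is abelian, and $F$ is a $\cC$-module functor because the $\cC$-action is right exact and so commutes with the coequalizer. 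For the adjunction,
\begin{align*}
\Hom_\cM(F(x_A), y) &\cong \eq\bigl(\Hom_\cM(x \otimes p, y) \rightrightarrows \Hom_\cM(x \otimes A \otimes p, y)\bigr) \\
&\cong \eq\bigl(\Hom_\cC(x, \IHom(p,y)) \rightrightarrows \Hom_\cC(x \otimes A, \IHom(p,y))\bigr) \\
&\cong \Hom_{\Mod{}{A}(\cC)}(x_A, \IHom(p,y)),
\end{align*}
using that $\Hom_\cM(-,y)$ sends coequalizers to equalizers and then the $-\otimes p \dashv \IHom(p,-)$ adjunction.

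If the module adjunction is an equivalence then $\IHom(p,-)$ is both a left and right adjoint, hence exact---and right-exactness of $\IHom(p,-)$ is by definition $\cC$-projectivity of $p$---and is faithful, which by Lemma~\ref{lem:gen} is the $\cC$-generator property. Conversely, assume $p$ is a $\cC$-projective $\cC$-generator. At $x_A = A$, the defining coequalizer of $F(A)$ is the bar coequalizer computing $A \otimes_A p$, which collapses to $p$; hence $\varepsilon_p : F(A) \to p$ is a canonical isomorphism, and the triangle identity then forces $\eta_A : A \to \IHom(p, F(A)) = A$ to be the identity. Using the assumption that $\eta$ and $\varepsilon$ are $\cC$-module natural transformations and that $F$ and $\IHom(p,-)$ are $\cC$-module functors, these isomorphisms propagate along the $\cC$-action to give that $\varepsilon_{c \otimes p}$ and $\eta_{x \otimes A}$ are isomorphisms for every $c, x \in \cC$. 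Finally, for an arbitrary $y \in \cM$ the $\cC$-generator property plus right exactness of $-\otimes p$ provides a presentation $c_1 \otimes p \to c_0 \otimes p \to y \to 0$; since $F$ is right exact as a left adjoint and $\IHom(p,-)$ is right exact by $\cC$-projectivity of $p$, a five-lemma argument applied to the naturality square for $\varepsilon$ on this presentation upgrades the iso statement to all of $\cM$. The unit is handled symmetrically using free presentations in $\Mod{}{A}(\cC)$.

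The main obstacle I anticipate is the propagation step: one must verify carefully that the canonical comparison isomorphisms $F(c \otimes A) \cong c \otimes F(A)$ and $\IHom(p, c \otimes p) \cong c \otimes A$ arising from the module-functor structures intertwine $\varepsilon_{c \otimes p}$ with $c \otimes \varepsilon_p$ and $\eta_{c \otimes A}$ with $c \otimes \eta_A$. This compatibility is exactly what the hypothesis that $\eta$ and $\varepsilon$ are module natural transformations encodes, and it is what permits the base-case computation at $A$ and $p$ to propagate and the subsequent five-lemma step to go through.
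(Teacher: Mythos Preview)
Your argument is correct. The construction of the algebra structure on $A$, the adjunction computation, and the ``only if'' direction match the paper essentially verbatim. The difference lies in the ``if'' direction.

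The paper handles the unit globally rather than object-by-object: since $\IHom(p,-)$ is a $\cC$-module functor and is exact (hence commutes with the coequalizer defining $F$), one computes directly
\[
\IHom\bigl(p,\,F(x_A)\bigr)\;\cong\;x\otimes_A \IHom(p,p)\;\cong\;x
\]
for \emph{every} $A$-module $x_A$ at once, and checks this identifies the unit with the identity. For the counit the paper avoids presentations entirely: once the unit is known to be an isomorphism, applying $\IHom(p,-)$ to the counit yields an isomorphism, and since $\IHom(p,-)$ is exact and faithful it reflects isomorphisms (Lemma~\ref{lem:fiaghtfulrightadjoint}), so the counit is an isomorphism.

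Your route---verify $\varepsilon_p$ and $\eta_A$ at the base objects, propagate along the $\cC$-action via the module-naturality of $\eta$ and $\varepsilon$, then extend by right-exact presentations and a cokernel/five-lemma step---is equally valid and has the merit of making completely explicit where the hypothesis that $\eta$ and $\varepsilon$ are module transformations is consumed. The paper's approach is shorter and more conceptual, trading your presentation argument for the reflects-isomorphisms trick; yours is more constructive. Both prove the theorem.
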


\noindent The proofs given in \cite{egno-book} and \cite{MR1976459} at first appear to depend on  the rigidity of $\cC$. Indeed the first step of the proof of \cite[Thm 7.10.1]{egno-book}
invokes 
\cite[Lemma 7.9.4]{egno-book}
whose proof makes explicit use of rigidity. The same lemma is used in step (3) of the proof of \cite[Thm 1]{MR1976459}. However with a little care the rigidity assumption can be avoided (cf. \cite[\S 7.24, Note 7.10]{egno-book}). 

\begin{proof}[Proof of Thm.~\ref{thm:C-module-Embedding}]
	We have a series of natural isomorphisms:
	\begin{align*}
		\Hom_{\Mod{}{A}(\cC)}&(b, \IHom(p,x))  \cong \Hom_{\Mod{}{A}(\cC)}( \coeq \left( b \otimes A \otimes A_A \rightrightarrows b \otimes A_A  \right), \IHom(p,x)) \\
		& \cong \eq \left( \Hom_{\Mod{}{A}(\cC)}(b \otimes A_A, \IHom(p,x) )  \rightrightarrows \Hom_{\Mod{}{A}(\cC)}(  b \otimes A \otimes A_A, \IHom(p,x))  \right) \\
		& \cong \eq \left( \Hom_{\cC}(b, \IHom(p,x) )  \rightrightarrows \Hom_{\cC}(  b \otimes A, \IHom(p,x))  \right) \\
		& \cong \eq \left( \Hom_{\cM}(b \otimes p, x )  \rightrightarrows \Hom_{\cM}(  (b \otimes A) \otimes p, x)  \right) \\
		& \cong  \Hom_{\cM}( \coeq \left( (b \otimes \IHom(p,p)) \otimes p \rightrightarrows b \otimes p \right), x) \\
		& \cong \Hom_{\cM}( F(b), x)
	\end{align*}  
	where $b \in \Mod{}{A}(\cC)$ and $x \in \cM$. This establishes that $\IHom(p,-): \cM \to \Mod{}{A}(\cC)$ is indeed right adjoint to $F$.

For the second part of the theorem first observe that if $F \dashv \IHom(p,-)$ induces an equivalence of left $\cC$-module categories $\cM \simeq \Mod{}{A}(\cC)$, then the functor also denoted $\IHom(p, -): \cM \to \cC$ is faithful and exact. Hence $p$ is by definition a $\cC$-projective $\cC$-generator.

Now suppose that $p$ is a $\cC$-projective $\cC$-generator.  We will proceed similarly to Prop.~\ref{prop:finitelinearcatsasmodules}.
We wish to show that the unit and counit of the adjunction $F \dashv \IHom(p,-)$ are isomorphisms. By assumption, $\IHom(p, -)$ is a $\cC$-module functor and hence for each $c \in \cC$ we have a natural isomorphism
\begin{equation*}
	\IHom(p, c \otimes p) \cong c \otimes \IHom(p, p).
\end{equation*}
 Since $\IHom(p, -)$ is exact it commutes with finite colimits, such as coequalizers. Hence for each $A$-module $x_A \in \Mod{}{A}(\cC)$ we have
 \begin{equation*}
 	\IHom(p, F(x_A)) \cong x \otimes_A \IHom(p,p) \cong x.
 \end{equation*}
Composing with the unit gives the identity map of $x$, hence the unit is an isomorphism. 

Similarly, again using the fact that $\IHom(p,-)$ is a $\cC$-module functor which commutes with coequalizers, we have that for each $m \in \cM$ 
the counit map
\begin{equation*}
		F( \IHom(p,m)) \to m
\end{equation*} 
becomes an isomorphism after applying $\IHom(p,-)$. It would follow that the counit map is an isomorphism if we knew that $\IHom(p,-)$ reflects isomorphisms. But this follows directly from Lemma~\ref{lem:fiaghtfulrightadjoint} and the fact that $\IHom(p,-)$ is exact and faithful. 
\end{proof}

If we additionally assume that $\cC$ is rigid, then some of the conditions of the previous theorem are automatically satisfied and become redundant. In particular Cor.~\ref{cor:module-adjoint} implies that when $\cC$ is rigid the functor $\IHom(p,-)$ can always be enhanced to a $\cC$-module functor.  The following lemma shows that in this case there always exists a $\cC$-projective $\cC$-generator.  
\begin{lemma}{\cite[\S 7.10]{egno-book}} \label{lma:Enough_C-projs}
	If $\cC$ is a finite tensor category and $\cM$ is a finite module category in which the action is right exact in $\cC$, then there exists a $\cC$-projective $\cC$-generator.
\end{lemma}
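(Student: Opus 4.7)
The plan is to take $p$ to be any ordinary projective generator of $\cM$, which exists by Proposition~\ref{prop:finitelinearcatsasmodules} (since $\cM$ is finite it is equivalent to the category of finite-dimensional modules over a finite-dimensional algebra, for which the regular module is a projective generator), and to show that such a $p$ is automatically a $\cC$-projective $\cC$-generator. The $\cC$-generator condition is immediate from Lemma~\ref{lem:gen}, which states that any ordinary generator is a $\cC$-generator. So the content of the argument is to show that an ordinary projective object in $\cM$ is $\cC$-projective.

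The key input is the rigidity of $\cC$ combined with Lemma~\ref{lem:partially_exact_action}: for every $c \in \cC$, the functor $c \otimes (-) : \cM \to \cM$ has the exact right adjoint ${}^*c \otimes (-)$, with $\Hom_\cM(c \otimes m, n) \cong \Hom_\cM(m, {}^*c \otimes n)$. If $p$ is a projective object of $\cM$, then for each $c \in \cC$ the functor $\Hom_\cM(c \otimes p, -) \cong \Hom_\cM(p, {}^*c \otimes (-))$ is a composite of exact functors, hence exact. In other words $c \otimes p$ is projective in $\cM$ for every $c \in \cC$.

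Using the defining adjunction $- \otimes p \dashv \IHom(p, -)$, this translates to the statement that $\Hom_\cC(c, \IHom(p, -)) \cong \Hom_\cM(c \otimes p, -)$ is an exact functor $\cM \to \Vect_k$ for every $c \in \cC$. It remains to deduce that $\IHom(p, -) : \cM \to \cC$ is itself right exact (it is already left exact as a right adjoint), which is exactly the $\cC$-projective condition. Given a short exact sequence $0 \to X \to Y \to Z \to 0$ in $\cM$, setting $c := \IHom(p, Z)$ and lifting the identity along the surjection $\Hom_\cC(\IHom(p, Z), \IHom(p, Y)) \twoheadrightarrow \Hom_\cC(\IHom(p, Z), \IHom(p, Z))$ exhibits $\IHom(p, Y) \to \IHom(p, Z)$ as a split epimorphism in $\cC$, giving the desired right exactness.

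The only potentially delicate step is this last one---passing from the exactness of $\Hom_\cC(c, \IHom(p,-))$ for every $c$ to the exactness of $\IHom(p,-)$ itself---but the split-epi trick via the identity on $\IHom(p, Z)$ handles it cleanly, so no real obstacle arises. All other ingredients (existence of a projective generator, exactness of tensoring by objects of a rigid category, and the Lemma~\ref{lem:gen} upgrade from generator to $\cC$-generator) are already in place.
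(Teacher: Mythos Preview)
Your proposal is correct and follows essentially the same strategy as the paper: take an ordinary projective generator $p$ of $\cM$, invoke Lemma~\ref{lem:gen} for the $\cC$-generator condition, and use rigidity together with the exactness of ${}^*c \otimes (-)$ to see that $\Hom_\cC(c,\IHom(p,-))\cong\Hom_\cM(c\otimes p,-)$ is exact for every $c\in\cC$.

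The only difference is in the very last step, where you pass from ``$\Hom_\cC(c,\IHom(p,-))$ exact for all $c$'' to ``$\IHom(p,-)$ exact''. You do this by setting $c=\IHom(p,Z)$ and lifting the identity to produce a section of $\IHom(p,Y)\to\IHom(p,Z)$, whereas the paper instead tests against a single projective generator $x$ of $\cC$ (which exists since $\cC$ is finite) and uses that $\Hom_\cC(x,-)$ reflects exact sequences. Both arguments are standard; yours has the minor advantage of not appealing to the existence of enough projectives in $\cC$, though under the hypotheses of the lemma this buys no extra generality.
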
   
\begin{proof}
	We claim that if $p \in \cM$ is a projective object (in the ordinary sense) then it is also $\cC$-projective. If this claim holds, then any (ordinary) projective generator of $\cM$ will be a $\cC$-projective $\cC$-generator (see Lemma \ref{lem:gen}), and projective generators are guaranteed to exist since $\cM$ is finite. 
	Thus assume that $p \in \cM$ is projective.  We need to show that $\IHom(p, -)$ is right exact. 
	
	Since $\cC$ is rigid, we have a natural isomorphism of functors:
\begin{equation*}
	\Hom_{\cM}(c \otimes p, m) \cong \Hom_{\cM}(p, {}^*c \otimes m).
\end{equation*}
Taking an object to its dual is a (contravariant) equivalence of categories hence is exact. Moreover since $p$ is projective and the $\cC$-module structure is right exact, we see that the functor $\Hom_{\cM}(- \otimes p, -)$ is left exact in the first variable and right exact in the second variable. Said another way, for each projective $p \in \cM$ we have a right exact functor
\begin{align*}
	G_p: & \; \cM \to \Fun^R(\cC^\op, \Vect) \\
	& m \mapsto (c \mapsto \Hom_{\cM}(c \otimes p, m)).
\end{align*} 
By Proposition \ref{thm:enrichment-of-mod-cats} this functor factors through the Yoneda embedding as
\begin{equation*}
	\IHom(p, -): \cM \to \cC.
\end{equation*}
Thus if $m \to m' \to m'' \to 0$ is an exact sequence in $\cM$ and $x \in \cC$ is any object we have an exact sequence
\begin{equation*}
	\Hom(x, \IHom(p,m)) \to \Hom(x, \IHom(p,m')) \to \Hom(x, \IHom(p,m'')) \to 0.
\end{equation*}
In particular this holds when $x$ is a projective generator (which is guaranteed to exist since $\cC$ is finite). It follows that 
\begin{equation*}
	\IHom(p,m) \to \IHom(p,m') \to \IHom(p,m'') \to 0
\end{equation*}
is an exact sequence: if $x \in \cC$ is a projective generator, then a sequence $y \to y' \to y'' \to 0$ in $\cC$ is exact if and only if 
\begin{equation*}
	\Hom(x, y) \to \Hom(x, y') \to \Hom(x, y'') \to 0
\end{equation*} 
is exact. 
\end{proof}

\begin{proof}[Proof of Thm.~\ref{thm:EGNO2.11.6}]
By Lemma~\ref{lma:Enough_C-projs} and Cor.~\ref{cor:module-adjoint}, the assumptions of Theorem~\ref{thm:C-module-Embedding} are satisfied.
\end{proof}

\begin{corollary} \label{cor:biexact_action}
	Let $\cC$ be a finite tensor category and let $\cM$ be a finite $\cC$-module category. If the action map $\cC \times \cM \to \cM$ is right exact in $\cC$, then the action is in fact exact in each variable separately.  
\end{corollary}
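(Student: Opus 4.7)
The plan is to transport the question, via Theorem~\ref{thm:EGNO2.11.6}, to a statement about module objects, where the $\cC$-action is inherited from the tensor product on $\cC$ itself, which is already known to be biexact by Lemma~\ref{lma:RigidIsExact}. Exactness in the $\cM$-variable (for each fixed $c\in\cC$) is already handed to us by Lemma~\ref{lem:partially_exact_action}, so the only real content is to upgrade the right exactness in the $\cC$-variable to full exactness.

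First I would apply Theorem~\ref{thm:EGNO2.11.6} to produce an algebra object $A\in\cC$ together with an equivalence of left $\cC$-module categories $\cM\simeq\Mod{}{A}(\cC)$. Under this equivalence, the left $\cC$-action on $\cM$ becomes the tautological action $(c,x_A)\mapsto (c\otimes x)_A$ on $\Mod{}{A}(\cC)$, whose underlying object in $\cC$ is just $c\otimes x$ with the $A$-action induced from that of $x$. Let $U:\Mod{}{A}(\cC)\to\cC$ denote the forgetful functor. Then $U$ intertwines the $\cC$-action on $\Mod{}{A}(\cC)$ with the left-regular action of $\cC$ on itself.

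Next I would record that $U$ is exact and faithful: faithfulness is immediate, and exactness follows because kernels and cokernels of $A$-module maps are computed in $\cC$ and carry unique compatible $A$-actions. Moreover $U$ is the right adjoint of the free functor $c\mapsto c\otimes A$, so Lemma~\ref{lem:fiaghtfulrightadjoint} applies and $U$ reflects isomorphisms. Since $U$ is exact, it preserves kernels and images, and reflection of isomorphisms then upgrades to reflection of exactness for arbitrary three-term sequences.

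Finally, given any short exact sequence $0\to c'\to c\to c''\to 0$ in $\cC$ and any $x_A\in\Mod{}{A}(\cC)$, the sequence $0\to c'\otimes x\to c\otimes x\to c''\otimes x\to 0$ is exact in $\cC$ by Lemma~\ref{lma:RigidIsExact}. This is the image under $U$ of the corresponding sequence $0\to c'\otimes x_A\to c\otimes x_A\to c''\otimes x_A\to 0$ in $\Mod{}{A}(\cC)$; since $U$ reflects exactness, the latter is exact as well. Transporting back across the equivalence $\cM\simeq\Mod{}{A}(\cC)$ proves that $(-)\otimes m:\cC\to\cM$ is exact for each $m\in\cM$. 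There is no real obstacle beyond invoking Theorem~\ref{thm:EGNO2.11.6} as a black box; the work lies in setting up that equivalence, which has already been done earlier in the paper.
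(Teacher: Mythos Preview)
Your proposal is correct and follows essentially the same route as the paper: invoke Theorem~\ref{thm:EGNO2.11.6} to realize $\cM$ as $\Mod{}{A}(\cC)$, use that the forgetful functor $U$ is exact, faithful, and a $\cC$-module functor, and then reduce exactness of $(-)\otimes m$ in $\cM$ to exactness of $(-)\otimes U(m)$ in $\cC$, which is Lemma~\ref{lma:RigidIsExact}. The paper's write-up is terser---it simply asserts that $U$ ``reflects short exact sequences'' without the explicit appeal to Lemma~\ref{lem:fiaghtfulrightadjoint}---but the argument is the same.
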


\begin{proof}
	By Lemma~\ref{lem:partially_exact_action} the functor $c \otimes (-)$ is exact for each $c \in \cC$. We must show that $(-) \otimes m$ is exact for each $m \in \cM$. By Theorem~\ref{thm:EGNO2.11.6} there exists an algebra object $A \in \cC$, and an equivalence of $\cC$-module categories $\cM \simeq \Mod{}{A}(\cC)$. Hence there exists a forgetful functor $U:\cM \to \cC$ which is a $\cC$-module functor, is exact, and reflects short exact sequences. It follows that $(-) \otimes m: \cC \to \cM$ is exact if and only if $(-) \otimes U(m): \cC \to \cC$ is exact, but this follows from Lemma~\ref{lma:RigidIsExact}. 
\end{proof}

\begin{remark}
	By taking opposite categories, in the above corollary one may replace `right exact in $\cC$' with `left exact in $\cC$'. 
\end{remark}

\section{Construction of the balanced tensor product} \label{sec:tc-deligne}

In this section we establish the existence of the balanced tensor product of finite module categories over a finite tensor category.  We begin by recalling the definition of the balanced tensor product from \cite{0909.3140}.

\begin{definition}
	Let $\cC$ be a linear monoidal category. 
	Let $\cM$ be a right $\cC$-module category and $\cN$ a left $\cC$-module category.  A bilinear functor $F: \cM \times \cN \to \cL$ is called right exact if it is right exact in each variable.   A {\em $\cC$-balanced functor} into a linear category $\cL$ is a right exact bilinear functor $F: \cM \times \cN \to \cL$ together with a natural isomorphism $F(\otimes^{\cM} \times \id_{\cN}) \cong F(\id_{\cM} \times \otimes^{\cN})$ satisfying the evident pentagon and triangle identities.
	A {\em $\cC$-balanced transformation} is a natural transformation $\eta:F \to G$ of $\cC$-balanced functors such that the following diagram commutes for all $m \in \cM$, $c \in \cC$, and $n \in \cN$:
\begin{center}
\begin{tikzpicture}
	\node (LT) at (0, 1) {$F(m \otimes^{\cM} c, n)$};
	\node (LB) at (0, 0) {$G(m \otimes^{\cM} c, n)$};
	\node (RT) at (3, 1) {$F(m, c \otimes^{\cN} n)$};
	\node (RB) at (3, 0) {$G(m, c \otimes^{\cN} n)$};
	\draw [->] (LT) -- node [left] {$$} (LB);
	\draw [->] (LT) -- node [above] {$$} (RT);
	\draw [->] (RT) -- node [right] {$$} (RB);
	\draw [->] (LB) -- node [below] {$$} (RB);
\end{tikzpicture}.
\end{center}
\end{definition}

\begin{definition}
	Let $\cM$ be a right $\cC$-module category and let $\cN$ be a left $\cC$-module category. The {\em balanced tensor product} is a linear category $\cM \boxtimes_{\cC} \cN$
	 together with a $\cC$-balanced right exact bilinear functor $\boxtimes_{\cC} : \cM \times \cN \to \cM \boxtimes_{\cC} \cN$, such that for all linear categories $\cD$, the functor $\boxtimes_{\cC}$ induces an equivalence between the category of $\cC$-balanced right exact bilinear functors $\cM \times \cN \to \cD$ and the category of right exact linear functors $\cM \boxtimes_{\cC} \cN \to \cD$. 
\end{definition}
\nid More succinctly, we might say, the balanced tensor product $\cM \boxtimes_{\cC} \cN$ corepresents $\cC$-balanced right exact bilinear functors out of $\cM \times \cN$.  The balanced tensor product is also known as the ``relative Deligne tensor product", because the (unbalanced) tensor product $\cM \boxtimes \cN$ of linear categories is often called the ``Deligne tensor product".  

If it exists, the balanced tensor product is unique up to equivalence, and this equivalence is in turn unique up to unique natural isomorphism. Said another way, the 2-category of linear categories representing the balanced tensor product is either contractible or empty. 

Etignof--Nikshych--Ostrik \cite{0909.3140} established the existence of the balanced tensor product of semisimple module categories over semisimple tensor categories over a field of characteristic zero.  A construction of the balanced tensor product for finite tensor categories satisfying the additional assumption that $\cM \boxtimes \cN$ is exact as a $\cC$-bimodule category can be extracted from \cite[Thm 3.1]{1102.3411}.  Note that the proof of the existence of the balanced tensor product outlined in \cite{MR3107567} uses the rigidity and finiteness assumptions in essential ways, but (despite \cite[Note 2.7]{MR3107567}) does not use exactness.  

We give an alternative construction of the balanced tensor product for finite tensor categories.

\begin{theorem} \label{thm:DelignePrdtOverATCExists}
	Let $\cC$ be a finite tensor category and let $\cM_{\cC}$ and ${}_{\cC}\cN$ be finite right and left $\cC$-module categories, respectively. Assume that the action of $\cC$ on $\cM$ and the action of $\cC$ on $\cN$ are right exact in the $\cC$-variable.
	\begin{enumerate}
		\item The balanced tensor product $\cM \boxtimes_{\cC} \cN$ exists and is a finite linear category.
		\item If $\cM = \Mod{A}{}(\cC)$ and $\cN = \Mod{}{B}(\cC)$, then $\cM \boxtimes_{\cC} \cN \simeq \Mod{A }{B}(\cC)$, the category of $A$--$B$-bimodule objects in $\cC$.
		\item The functor $\boxtimes_{\cC}: \cM \times \cN \to \cM \boxtimes_{\cC} \cN$ is exact in each variable and satisfies 
		\begin{equation*}
			 \Hom_{\cM \boxtimes_{\cC} \cN} (x \boxtimes_{\cC} y, x' \boxtimes_{\cC} y') \cong \Hom_{\cC}(1, \IHom_{\cM}(x,x') \otimes \IHom_{\cN}(y, y'))
		\end{equation*}
		\item Given exact $\cC$-module functors $F_0: M \to M'$ and $F_1: N \to N'$, the $\cC$-balanced functor $F: M \times N \to M' \times N' \to M' \boxtimes_{\cC} N'$ induces an exact functor $\overline{F}: M \boxtimes_{\cC} N \to M' \boxtimes_{\cC} N'$.
	\end{enumerate} 
\end{theorem}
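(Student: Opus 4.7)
The strategy is first to invoke Theorem~\ref{thm:EGNO2.11.6} to identify $\cM \simeq \Mod{A}{}(\cC)$ and $\cN \simeq \Mod{}{B}(\cC)$ as $\cC$-module categories, and then to verify that the category $\Mod{A}{B}(\cC)$, equipped with the bilinear functor $\boxtimes_\cC \colon (M,N) \mapsto M \otimes N$ (with left $A$-action coming from $M$, right $B$-action coming from $N$, and $\cC$-balancing isomorphism $(M \otimes c) \otimes N \cong M \otimes (c \otimes N)$ supplied by the associator of $\cC$), satisfies the universal property of $\cM \boxtimes_\cC \cN$. With this universal property in hand, parts~(1) and~(2) hold simultaneously. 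For finiteness of $\Mod{A}{B}(\cC)$ I would apply Lemma~\ref{lem:recognizefinitecats} to the free--forgetful adjunction $A \otimes (-) \otimes B \dashv U$ between $\cC$ and $\Mod{A}{B}(\cC)$; the forgetful functor $U$ is faithful, and by biexactness of the tensor product (Lemma~\ref{lma:RigidIsExact}) it is also exact. Right-exactness of $\boxtimes_\cC$ in each variable is likewise immediate from Lemma~\ref{lma:RigidIsExact}.

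The heart of the argument is the universal property. Given a right-exact $\cC$-balanced bilinear functor $G\colon \cM \times \cN \to \cX$, every $W \in \Mod{A}{B}(\cC)$ admits the canonical split (hence absolute) bar coequalizer
$$A \otimes (A \otimes W|_B) \rightrightarrows A \otimes W|_B \twoheadrightarrow W$$
in $\Mod{A}{B}(\cC)$, where $W|_B \in \Mod{}{B}(\cC) = \cN$ is the underlying right $B$-module, $A \otimes (-)$ is the left adjoint to the forgetful functor $\Mod{A}{B}(\cC) \to \Mod{}{B}(\cC)$ (and coincides, via $\boxtimes_\cC$, with $\boxtimes_\cC(A, -)$ for $A \in \cM$ the free left $A$-module), and the two parallel maps are $\mu_A \otimes \id$ and $\id_A \otimes \lambda^W$. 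Motivated by this presentation, define
$$\bar F(W) \;:=\; \coeq\bigl(G(A, A \otimes W|_B) \rightrightarrows G(A, W|_B)\bigr),$$
where one map is the composite $G(A, A \otimes W|_B) \cong G(A \otimes A, W|_B) \xrightarrow{G(\mu_A, \id)} G(A, W|_B)$ (using the $\cC$-balancing of $G$ with $m = A$, $c = A$, $n = W|_B$) and the other is $G(\id, \lambda^W)$. The verifications are: that $\bar F$ is functorial in $W$, that it is right-exact, that it comes equipped with a natural isomorphism $\bar F(M \otimes N) \cong G(M, N)$ for $M \in \cM$ and $N \in \cN$ compatible with the $\cC$-balancing, and that any right-exact extension of $G$ agrees with $\bar F$ up to canonical isomorphism. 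This establishes (1) and (2).

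For part~(3), biexactness of $\boxtimes_\cC$ follows from biexactness of tensor in $\cC$ and the fact that the forgetful functor $\Mod{A}{B}(\cC) \to \cC$ is exact and conservative; the Hom formula is obtained by writing $\Hom_{\Mod{A}{B}(\cC)}(x \otimes y, x' \otimes y')$ as the equalizer of two maps on $\Hom_\cC(x \otimes y, x' \otimes y')$ enforcing $A$- and $B$-equivariance, and then applying the enriched-Hom adjunctions of Proposition~\ref{thm:enrichment-of-mod-cats} for $\cM$ and $\cN$ to rewrite the result as $\Hom_\cC(1, \IHom_\cM(x, x') \otimes \IHom_\cN(y, y'))$. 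For part~(4), the composite $\boxtimes_\cC \circ (F_0 \times F_1)\colon \cM \times \cN \to \cM' \boxtimes_\cC \cN'$ is right-exact and $\cC$-balanced (since $F_0$ and $F_1$ are $\cC$-module functors), so by the universal property it induces an essentially unique right-exact linear functor $\bar F\colon \cM \boxtimes_\cC \cN \to \cM' \boxtimes_\cC \cN'$; exactness of $\bar F$ then follows from the exactness of $F_0$, $F_1$, and the coequalizer construction. The main obstacle I anticipate is the coherence bookkeeping in the universal-property step: namely checking that $\bar F(W)$ is functorial in $W$, compatible with the $\cC$-balancing of $G$ (pentagon and triangle constraints), and genuinely recovers $G$ on objects of the form $M \otimes N$ in a way natural in both variables. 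The splitness---and hence absoluteness, after forgetting to $\cN$---of the bar coequalizer is crucial here, since absolute coequalizers are preserved by every functor, which underwrites both functoriality of $\bar F$ and its compatibility with right-exact operations.
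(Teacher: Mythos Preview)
Your overall strategy matches the paper's: reduce to $\cM \simeq \Mod{A}{}(\cC)$ and $\cN \simeq \Mod{}{B}(\cC)$ via Theorem~\ref{thm:EGNO2.11.6}, verify finiteness of $\Mod{A}{B}(\cC)$ via the free--forgetful adjunction and Lemma~\ref{lem:recognizefinitecats}, and prove the universal property by presenting every bimodule object as a bar-type coequalizer of objects in the image of $\cM \times \cN$.

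There is a minor difference in the universal-property step. You use the left-handed bar resolution $A \otimes A \otimes W|_B \rightrightarrows A \otimes W|_B \twoheadrightarrow W$ and exploit the $\cC$-balancing of $G$ to write one of the parallel maps, so that $\bar F(W)$ is defined directly as a coequalizer. The paper instead uses the right-handed resolution ${}_AX \otimes B \otimes B_B \rightrightarrows {}_AX \otimes B_B \to {}_AX_B$; there the difference map $\delta$ is not in the image of $\cM \times \cN$, so the paper constructs an auxiliary map $\overline{\delta}_X$ via a commutative diagram interleaving both the $A$- and $B$-side bar maps before taking a cokernel. Your one-sided approach is arguably a bit more direct, though the coherence bookkeeping you flag is of comparable weight to what the paper leaves to the reader. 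For the Hom formula in~(3), the paper gives a short chain of isomorphisms using that $\IHom(p,-)$ is a strong $\cC$-module functor (Corollary~\ref{cor:module-adjoint}) rather than your equalizer description; both routes are fine.

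The genuine gap is in part~(4). The sentence ``exactness of $\bar F$ then follows from the exactness of $F_0$, $F_1$, and the coequalizer construction'' does not stand on its own: the coequalizer/cokernel description of $\bar F$ only yields right exactness a priori, and the cokernel of a natural transformation between exact functors need not be left exact. The paper's argument is different and more concrete: it identifies $F_0 \cong {}_{A'}x \otimes_A (-)$ and $F_1 \cong (-)\otimes_B y_{B'}$ for bimodule objects $x,y$, notes that exactness of $F_0,F_1$ means $x,y$ are flat, and then checks directly that ${}_{A'}x \otimes_A (-) \otimes_B y_{B'}$ is exact on $\Mod{A}{B}(\cC)$ by forgetting to $\cC$ and tensoring in two flat steps. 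Your approach could be repaired by arguing that the bar coequalizer is split \emph{naturally in $W$} (via the unit of $A$), so that the induced short sequence of coequalizers is a natural retract of a short exact sequence and hence exact; but that needs to be said explicitly rather than folded into ``the coequalizer construction.''
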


\begin{proof}  
	 By Theorem \ref{thm:EGNO2.11.6}, there exist algebra objects $A, B \in \cC$ and equivalences $\cM \simeq \Mod{A}{}(\cC)$ and $\cN \simeq \Mod{}{B}(\cC)$. The linear category $\Mod{A }{B}(\cC)$ is finite by Lemma~\ref{lem:recognizefinitecats} (using the free-forgetful adjunction to $\cC$).   Thus (2) implies (1).
	 
	 By Lemma \ref{lma:RigidIsExact}, the tensor product functor
	\begin{equation*}
		\cM \times \cN \simeq \Mod{A}{}(\cC) \times  \Mod{}{B}(\cC) \to \Mod{A}{B}(\cC)
	\end{equation*}
	is exact in each variable separately. This bilinear functor is also $\cC$-balanced by the associator of $\cC$. Moreover we have
	\begin{align*}
		&\Hom_{\cC}(1, \IHom_{\cM}(x,x') \otimes \IHom_{\cN}(y, y')) \\
		& = \Hom_{\cC}(1, \IHom_{\Mod{A}{}}(x,x') \otimes \IHom_{\cN}(y, y')) \\
		&\cong \Hom_{\cC}(1, \IHom_{\Mod{A}{}}(x,x' \otimes \IHom_{\cN}(y, y')) ) \\
		& \cong \Hom_{\Mod{A}{}}(x, x' \otimes \IHom_{\cN}(y, y') ) \\
		& = \Hom_{\Mod{A}{}}(x, x' \otimes \IHom_{\Mod{}{B}}(y, y') )\\
		& \cong \Hom_{\Mod{A}{}}(x,  \IHom_{\Mod{}{B}}(y, x' \otimes y') )\\
		& \cong \Hom_{\Mod{A}{B}}(x \otimes y, x' \otimes y')
	\end{align*} 
where the second and fifth isomorphisms use the fact that the enriched hom is a $\cC$-module functor (see Cor.~\ref{cor:module-adjoint}). This establishes the formula in (3), and so $(2)$ implies $(3)$. 

	We now prove (2), and then later establish (4).  We wish to show that for any linear category $\cD$ the category of right exact functors 
\begin{equation*}
	\overline{F}:\Mod{A}{B}(\cC) \to \cD
\end{equation*}
	is naturally equivalent to the category of $\cC$-balanced functors $F:\cM \times \cN \to \cD$ that are right exact in each variable separately. Every functor of the former type certainly restricts to one of the later type; we must show that a functor of the latter type extends uniquely (up to canonical isomorphism) to one of the former type. 
	
The key observation is that every object of $\Mod{A}{B}(\cC)$ may be functorially written as a coequalizer of objects in the image of $\cM \times \cN$. Specifically, for any $X \in \Mod{A}{B}(\cC)$, we have the coequalizer:
\begin{equation*}
	{}_A X \otimes B \otimes B_B \rightrightarrows {}_A X \otimes B_B \to {}_A X_B.
\end{equation*}
Let 
$\delta: {}_A X \otimes B \otimes B_B \to {}_A X \otimes B_B$ 
be the difference of the two maps in the coequalizer. 
For any right exact functor $\overline{F}: \Mod{A}{B}(\cC) \to \cD$, the value $\overline{F}({}_A X_B)$ is canonically determined as a cokernel:
\begin{equation*}
	\overline{F}({}_A X_B) = \coker \left( \overline{F}(\delta): \overline{F}( {}_A X \otimes B \otimes B_B) \to \overline{F}({}_A X \otimes B_B) \right).
\end{equation*} 
	
Suppose we are given a $\cC$-balanced functor $F:\cM \times \cN \to \cD$ that is right exact in each variable separately. It is tempting to try to define the extension $\overline{F}: \Mod{A}{B}(\cC) \to \cD$ via a formula of the type:
\begin{equation*}
	``\overline{F}({}_A X_B) := \coker \left( F(\delta): F( {}_A X, B \otimes B_B) \to F({}_A X, B_B) \right)."
\end{equation*} 
The difficulty is that while the relevant objects are in the image of $\cM \times \cN$, the map $\delta$ is not. 
Yet for each $X \in \Mod{A}{B}(\cC)$ we may define a map $\overline{\delta}_X$ as the difference between the composite of the balancing isomorphism and the action of $B$ on $X$,
\begin{equation*}
F( {}_A X, B \otimes B_B) \cong F({}_A X \otimes B, B_B) \to F({}_A X, B_B),
\end{equation*}
and the action of $B$ on itself,
\begin{equation*}
F( {}_A X, B \otimes B_B) \to F({}_A X, B_B).
\end{equation*}
The desired extension can then be defined as $\overline{F}({}_A X_B) := \coker(\overline{\delta}_X)$.
We leave it to the reader to verify that this extension gives a well-defined right exact functor 
\begin{equation*}
	\overline{F}: \Mod{A}{B}(\cC) \to \cD,
\end{equation*} 
and implements the desired equivalence between such right exact functors and $\cC$-balanced exact-in-each-variable functors. Verifying that this construction is well defined makes use of the pentagon identity satisfied by $\cC$-balanced functors. This establishes (1), (2), and (3). 

We now prove the final property (4). By Theorem \ref{thm:EGNO2.11.6}, there exist algebra objects $A, B, A'$, and $B' \in \cC$ and equivalences $\cM \simeq \Mod{A}{}(\cC)$, $\cN \simeq \Mod{}{B}(\cC)$, $\cM' \simeq \Mod{A'}{}(\cC)$, and $\cN \simeq \Mod{}{B'}(\cC)$. Since $F_0$ and $F_1$ are right exact, they are equivalent to tensoring with bimodules:
\begin{align*}
	F_0(-) &\cong {}_{A'}x \otimes_A (-); \\
	F_1(-) & \cong (-) \otimes_B y_{B'}.
\end{align*}
Since $F_0$ and $F_1$ are exact, we may call these modules {\em flat} over $A$ or $B$, respectively. We wish to show that the induced functor:
\begin{equation*}
	\overline{F}(-) = {}_{A'}x \otimes_A (-) \otimes_B y_{B'}: \Mod{A}{B}(\cC) \to \Mod{A'}{B'}(\cC)
\end{equation*}
is exact. Since the forgetful functor $U: \Mod{A'}{B'}(\cC) \to \cC$ is exact and reflects short exact sequences, it is enough to show that 
\begin{equation*}
	x \otimes_A (-) \otimes_B y: \Mod{A}{B}(\cC) \to \cC
\end{equation*}
is exact. Let $0 \to m \to m' \to m'' \to 0$ be a short exact sequence of $A$--$B$-bimodules. After tensoring with $x$ on the left we obtain a sequence of right $B$-modules:
\begin{equation*}
	0 \to x \otimes_A m \to x \otimes_A m' \to x \otimes_A {m''} \to 0
\end{equation*}
Since $x$ is flat, this sequence is exact after forgetting the $B$-module structure. Hence it is also an exact sequence of $B$-modules. Thus, since $y$ is flat, we obtain an exact sequence
\begin{equation*}
		0 \to x \otimes_A m \otimes_B y \to x \otimes_A m'\otimes_B y \to x \otimes_A {m''}  \otimes_B y \to 0
\end{equation*}
	as desired. 
\end{proof}

\begin{remark}
The balanced (Deligne) tensor product, constructed above for finite module categories over a finite tensor category, may fail to exist without the finiteness assumptions---see Lopez Franco \cite{1212.1545} for a counterexample, even in the unbalanced case.  A balanced tensor product does exist a bit more generally, though.  Recall that a finitely cocomplete $k$-additive category is a category that is $\overline{\Vect}_k$-enriched and that admits all finite colimits.  Right exact functors between such categories are, by definition, those that preserve finite colimits.  The balanced ``Kelly" tensor product of finitely cocomplete $k$-additive module categories (over a rigid finitely cocomplete $k$-additive tensor category) corepresents balanced right exact bilinear functors into finitely cocomplete $k$-additive categories.  Unlike the Deligne tensor product, the Kelly tensor product always exists---see \cite{MR651714, MR648793} for the unbalanced case and \cite[Rmk~3.21]{1501.04652} for the balanced case.  Lopez Franco \cite{1212.1545} shows that when the unbalanced Deligne tensor product exist, it is equivalent to the Kelly tensor product.

The proof of the preceding theorem shows that when $\cC$ is a finite tensor category, and $\cM_{\cC}$ and ${}_{\cC}\cN$ are finite right and left $\cC$-module categories, the balanced Deligne tensor product is equivalent to the balanced Kelly tensor product.  This equivalence follows because the proof that $\Mod{A}{B}(\cC)$ corepresents $\cC$-balanced functors into linear categories just as well shows that $\Mod{A}{B}(\cC)$ corepresents $\cC$-balanced functors into finitely cocomplete $k$-additive categories.  (In the displayed commutative diagram, when the target category $\cD$ is not abelian but merely additive, the rows are `exact' in the sense that they are cokernel sequences.)
\end{remark}

\begin{remark}
The construction of the balanced tensor product outlined in~\cite{MR3107567} expresses the tensor $\cM \boxtimes_{\cC} \cN$ as a category of right exact $\cC$-module functors.  The expression~\cite[Eq~13]{MR3107567} for this functor category is cited from~\cite{0909.3140}, and in both places is appropriate for the balanced tensor of module categories but is not quite correct for the balanced tensor of bimodule categories (being off by a twist by a double dual functor).  The functor category that is a balanced tensor product for bimodule categories is $\mathrm{Fun}_{\cC\textrm{-}\mathrm{mod}}^{\mathrm{r.e.}}(\cM^{\ast},\cN)$, where the $\cC$--$\cD$-bimodule category $\cM^{\ast}$ has underlying linear category $\cM^{\mathrm{op}}$, with the left action by an object $c \in \cC$ given by acting on the right by the \emph{left} dual object ${}^{\ast} c \in \cC$ and the right action by an object $d \in \cD$ given by acting on the left by the \emph{left} dual object ${}^{\ast} d \in \cD$.  This functor-category description of the balanced tensor is related to our bimodule description as follows: when $\cM = \Mod{A}{}(\cC)$ and $\cN = \Mod{}{B}(\cC)$, there is a functor equivalence $\Mod{A}{B}(\cC) \ra \mathrm{Fun}_{\cC\textrm{-}\mathrm{mod}}^{\mathrm{r.e.}}(\cM^{\ast},\cN)$ taking a bimodule object ${}_A X_B$ to the functor that takes the object $m \in \cM = \Mod{A}{}(\cC)$ viewed as an object of $\cM^{\ast}$ to the object $m^\ast \otimes_A X \in \Mod{}{B}(\cC) = \cN$, where $m^\ast$ is the right dual of $m$ viewed as an object of $\cC$.
\end{remark}

\begin{example}
Let $\Vect[K]$ denote the category of $K$-graded vector spaces for a finite group $K$.  The balanced product $\Vect \boxtimes_{\Vect[K]} \Vect$ is the category of $k[K]$-bimodules in $\Vect[K]$. As mentioned in Example~\ref{eg:vect}, the category of $k[K]$-modules in $\Vect[K]$ is equivalent to $\Vect$, so the $k[K]$-bimodules in $\Vect[K]$ can be identified with $\text{mod-}k[K] \cong \Rep(K)$.
\end{example}

\begin{remark}
	If ${}_{\cD}\cM_{\cC}$ and ${}_{\cC}\cN_{\cE}$ are bimodule categories, then the actions of $\cD$ and $\cE$ induce a $\cD$--$\cE$-bimodule category structure on $\cM \boxtimes_{\cC} \cN$. This bimodule category satisfies the analogous universal property for $\cC$-balanced bilinear bimodule functors.
\end{remark}


Part (2) of the above theorem expresses the balanced tensor product of two module categories $\cM = \Mod{A}{}(\cC)$ and $\cN = \Mod{}{B}(\cC)$ as a category of bimodules $\cM \boxtimes_\cC \cN \simeq \Mod{A}{B}(\cC)$.  When the tensor category $\cC$ is merely $\Vect$, this expresses the ordinary tensor product $\Mod{}{A}(\Vect) \boxtimes \Mod{}{B}(\Vect)$ of two categories of modules again as a category of modules, $\Mod{}{(A \otimes B)}(\Vect)$.  More generally, the ordinary tensor product of two categories of modules in any tensor categories is again a category of modules, as follows.

\begin{proposition}
	If $\cN = \Mod{}{B}(\cC)$ and $\cP = \Mod{}{C}(\cD)$, for algebra objects $B \in \cC$ and $C \in \cD$, then $$\cN \boxtimes \cP \simeq \Mod{}{(B \boxtimes C)}(\cC \boxtimes \cD)$$ as a left $\cC \boxtimes \cD$-module category.
\end{proposition}

\begin{proof}
	The forgetful functors from $\cN$ to $\cC$ and $\cP$ to $\cD$ are part of monadic adjunctions:
	\begin{align*}
		(-) \otimes B_{B}:\cC \rightleftarrows \cN = \Mod{}{B}(\cC): U \\
		(-) \otimes C_{C}:\cD \rightleftarrows \cP = \Mod{}{C}(\cD): U
	\end{align*}
	Since both functors in the adjunction are right exact (the forgetful functor is exact, not just left exact) these adjunctions descend to an adunction between the tensor products. 
	\begin{equation*}
		(-) \otimes (B \boxtimes C)_{B \boxtimes C}: \cC \boxtimes \cD \rightleftarrows \cN \boxtimes \cP: \overline{U}.
	\end{equation*}
	Any $\cC \boxtimes \cD$-module functor from $\cC \boxtimes \cD$ to itself is given by tensoring by an object in $\cC \boxtimes \cD$, hence any monad on $\cC \boxtimes \cD$ that is compatible with the module actions comes from an algebra in $\cC \boxtimes \cD$.  Hence we need only show that this adjunction is monadic.  
		
	By the the crude monadicity theorem \cite[\S~3.5]{MR771116} we only need to show that $\overline{U}$ reflects isomorphisms and $\overline{U}$ preserves coequalizers of reflexive pairs.  Observe that the individual functors (both called $U$) have these properties. Moreover everything in $\cN$ is a coequalizer of objects in the image of $\cC$, everything in $\cP$ is a coequalizer of objects in the image of $\cD$, and everything in $\cN \boxtimes \cP$ is a coequalizer of objects in the image of $\cN \times \cP$. Thus it follows that every object in $\cN \boxtimes \cP$ is a coequalizer of objects in the image of $\cC \times \cD$. For such objects $\overline{U}$ reflects isomorphisms, since the original $U$ do so. Hence, by the five lemma, it follows that $\overline{U}$ reflects isomorphisms. 

For the latter property we will in fact show that $\overline{U}$ preserves all coequalizers. Since our categories are additive the coequalizer of $f$ and $g$ is the cokernel of $(f-g)$.  Thus it is sufficient to show that $\overline{U}$ is exact and hence that $\overline{U}$ preserves cokernels.  The exactness of $\overline{U}$ follows from Part (4) of Theorem~\ref{thm:DelignePrdtOverATCExists} and exactness of the original forgetful functors $U$.  
\end{proof}

\bibliographystyle{alpha}
\bibliography{dtcbib}
\end{document}